\newtheorem{proposition}{Proposition}
\newtheorem{theorem}{Theorem}
\newtheorem{lemma}{Lemma}
\newtheorem{corollary}{Corollary}
\theoremstyle{remark}
\newtheorem{definition}{Definition}
\newtheorem{remark}{Remark}
\newcommand{\R}{{\mathbb R}}
\newcommand{\onetom}{1,\dots,m}
\begin{document}
\title{Stability of phase difference trajectories of networks of Kuramoto oscillators with time-varying couplings and intrinsic frequencies}
\author{Wenlian Lu\thanks{School of Mathematical Sciences, Fudan University, Shanghai 200433, China (e-mail: wenlian@fudan.edu.cn).}\and Fatihcan M. Atay\thanks{Department of Mathematics, Bilkent University, 06800 Bilkent, Ankara, Turkey (e-mail: atay@member.ams.org).}}

\maketitle
\begin{abstract}
We study dynamics of phase-differences (PDs) of coupled oscillators where both the intrinsic frequencies and the couplings vary in time. In case the coupling coefficients are all nonnegative, we prove that  the PDs are asymptotically stable if there exists $T>0$ such that the aggregation of the time-varying graphs across any time interval of length $T$ has a spanning tree. We also consider the situation that the coupling coefficients may be negative and provide sufficient conditions for the asymptotic stability of the PD dynamics. Due to time-variations, the PDs are asymptotic to time-varying patterns rather than constant values. Hence, the PD dynamics can be regarded as a generalisation of the well-known phase-locking phenomena. We explicitly investigate several particular cases of time-varying graph structures, including asymptotically periodic PDs due to periodic coupling coefficients and intrinsic frequencies, small perturbations, and fast-switching near constant coupling and frequencies, which lead to PD dynamics close to a phase-locked one. Numerical examples are provided to illustrate the theoretical results.
\end{abstract}

\section{Introduction}

The Kuramoto model of coupled oscillators \cite{Kura1,Kura2} has been one of the most popular mathematical model to describe collective dynamics in, for instance, neural systems \cite{Break}, power grids \cite{Mach}, and seismology \cite{Vas}, due to its ability to describe the phase dynamics of coupled systems \cite{Kopell1,Kopell2}. A standard first-order Kuramoto model can be described as follows:
\begin{equation}
\dot{\theta}_{i}=\omega_{i}+\sum_{j=1}^{n}a_{ij}\sin(\theta_{j}-\theta_{i}),\quad i=\onetom,  \label{Kura}
\end{equation}
where $\theta_{i}\in S^1$ is the phase of the $i$-th oscillator, $\omega_{i}$ is its intrinsic frequency, and $a_{ij}$ is the coupling strength measuring the strength of the influence of oscillator $j$ on $i$.
Among the rich spectrum of dynamics \eqref{Kura} possesses, synchronization phenomenon has attracted a lot of interest from diverse fields.
Also known as phase-locked equilibrium, synchronization refers to the state where oscillators
lock their phase differences (PD) via local interactions, namely, the limit $\lim_{t\to\infty}(\theta_{i}(t)-\theta_{j}(t))$ exists for all $i,j$. This model exhibits phase transitions at critical values of
coupling, beyond which a collective behavior is achieved \cite{Peng}.

Meanwhile, the last two decades have witnessed the new field of network science bringing new insights into the study of models of collective behavior, such as the Kuramoto model \eqref{Kura}, where the set $\{a_{ij}\}$ in (\ref{Kura}) is identified with a (weighted) graph structure. New results have been obtained with the help of the emerging new methodologies, such as the dimension reduction ansatz \cite{Ott1,Ott2} and the consensus analysis in networked system \cite{Olf,Jad1} with the Lyapunov function method, and the effects of small-world and scale-free structures on synchronization were studied \cite{Gra,More1}. For more details, we refer to the comprehensive review literature \cite{Ace,Rod} and the references therein.

The majority of the existing literature is concerned with networks with static topology and couplings. However, many real-world applications from the social, natural, and engineering disciplines include a temporal variation in the topology of the network.
In communication networks, for example, some connections may fail due to occurrence of an obstacle between agents \cite{Olf2} and new connections may be created when one agent enters the effective region of other agents \cite{Vic,Jad}.
Time variability in the system structure has been experimentally reported for brain signals \cite{Rud,Shee}. Hence, there are important cases where the model should be formulated with time-varying parameters, which may lead nonequilibrium dynamics. {\color{blue} Synchronization of time-varying networks has recently attracted a lot of attention in the scientific literature \cite{Bely,Sku,Porfiri2006,Stil,Porfiri2007,LAJ1,LAJ2,Fuji,YLC2013,Levis}.} However, only very few papers have studied time-varying (also termed time-dependent) parameters in the Kuramoto model. In \cite{Pet,Pie}, the techniques of order parameters, thermodynamic limits, and the Ott-Antonsen ansatz as well as the dimension reduction method were extended to treat the Kuramoto model with a time-varying coupling that originates from
another nonconstant mean field \cite{Shee2}, and stable time-dependent collective dynamics were identified. The time-varying model was also investigated from a control point of view. In \cite{Lea}, minimising the $L_{2}$-norm of time-varying coupling were studied subject to synchrony performance, and in \cite{Fran}, input-to-state stability was considered. In addition, negative couplings should also be considered in a number of physical scenarios, for instance, in repressive synaptic couplings from inhibitory neurons in neural systems, inhibitory/inactive interactions in genetic regulation networks, and hostile relationship in social networks.

In this paper, we study phase dynamics of the Kuramoto model
where both coupling strengths and frequencies are time varying,
and additionally the coupling strengths are allowed to assume negative values.
Specifically, we consider the system
\begin{equation}
\dot{\theta}_{i}=\omega_{i}(t)+\sum_{j=1}^{m}a_{ij}(t)\sin(\theta_{j}-\theta_{i}),\quad i=\onetom,
\label{TVKR}
\end{equation}
where $\omega_{i}$ and $a_{ij}$ are varying with respect to time.
We mathematically formulate the non-equilibrium dynamics in the model by the phase differences (PDs) between oscillators. {\color{blue}In comparison, the existing literature mostly uses self-consistent solutions \cite{Ace} to investigate the phase difference between individual oscillators and the mean-field frequency \cite{Pet2013}, derive empirical criterions of stability for the distributions of parameters \cite{Ott1,Ott2,Iat}, and discuss the cases of phase shift in the coupling function \cite{Omel}.}

We derive and prove a series of sufficient conditions that guarantee that the PDs are asymptotically stable, in particular for the scenarios when negative couplings occur. In general, asymptotically stable PDs need not be constants but may be functions of time.
We study three specific scenarios of time-variability, which include periodicity, small perturbations, and fast-switching in the time-varying couplings and intrinsic frequencies. We identify the phase-unlocking dynamics in each case.

This paper is organized as follows. In Section 2, asymptotic stability of PDs is investigated. Particular cases of asymptotic PD dynamics are studied in Section 3 with numerical examples. Section 4 concludes the paper.

\textbf{Notation.} $\R^{n}$ and $\mathbb C^{n}$ stand for the $n$-dimensional Euclidean real and complex spaces, respectively. For a symmetric square matrix $B\in\R^{m,m}$, we order the eigenvalues as $\lambda_{1}(B)\le\lambda_{2}(B)\le\cdots\le\lambda_{m}(B)$, counting multiplicities. The Euclidean norm of a vector and the matrix induced by it is denoted by $\|\cdot\|$. For a subspace $\mathscr L$ in $\R^{m}$ and an $\mathscr L$-invariant matrix $U\in\R^{m,m}$ (i.e., $Uy\in\mathscr L$ for all $y\in\mathscr L$), the matrix norm $\|\cdot\|_{\mathscr L}$
is defined by
\begin{equation*}
\|U\|_{\mathscr L}=\max_{y\in\mathscr L,~\|y\|=1}\|Uy\|.
\end{equation*}
The set of nonnegative integers is denoted by $\mathbb{Z}^+$ and positive integers by $\mathbb{N}$.
Denote $[z]^{-}=\min\{z,0\}$. Boldface ${\mathbf 1}$ stands for the column vector of proper dimensions with all components equal to $1$, $o(\epsilon)$ denotes the infinitesimal as $\epsilon\to 0$, and $\lfloor z\rfloor$ stands for the largest integer less than or equal to $z$.

\section{Stability analysis}

Let $\mathcal G=\{V,E,A\}$ be a directed, weighted and signed graph, where $V=\{\onetom\}$ stands for the node set and $E$ for the link set, such that $(i,j)\in E$ if there is a link from node $j$ to node $i$, and $A=\{a_{ij}\}$ stands for the weight set. It holds that $(i,j)\in E$ if and only if $a_{ij}\ne 0$.
We do not consider self-links, i.e., $a_{ii}=0$ $\forall i$. The (signed) Laplacian of the graph is defined as $L=[l_{ij}]_{i,j=1}^{m}$ with $l_{ij}=-a_{ij}$ for $i\ne j$ and $l_{ii}=-\sum_{j\ne i}l_{ij}$.
In the particular case of nonnegative coupling coefficients, $l_{ij}\le 0$ for all $i\ne j$ so that $-L$ is a Metzler matrix.
We use $N_{i}=\{j:(i,j)\in E\}$ to denote the (in-)neighborhood of node $i$. The set of nodes having links to both $i$ and $j$ is denoted by $\Lambda_{ij}=\{k: a_{ik}>0~{\rm and}~a_{jk}>0\}$.
 For $\eta>0$, the threshold graph (or, the $\eta$-graph) of $L$ is defined as that the graph whose link set $E$ is composed of those edges $(i,j)$ with $l_{ij}<-\eta$.

The notation extends to the time-varying case in an obvious way. Thus, the time-varying adjacency matrix $A(t)=[a_{ij}(t)]$ corresponds to a dynamical graph $\mathcal G(t)$ with a (fixed) node set $V$ and time-varying link set $E(t)$. The time-varying Laplacian $L(t)$ has components $l_{ij}(t)=-a_{ij}(t)$ for $i\ne j$ and $l_{ii}(t)=-\sum_{j=1}^{m}l_{ij}(t)$. The time-varying in-neighborhood of node $i$ is $N_{i}(t)=\{j:~a_{ij}(t)\ne 0\}$, and similarly
\begin{align}
\Lambda_{ij}(t)=\{k: a_{ik}(t)>0 \; {\rm and}\;a_{jk}(t)>0\}.  \label{Lam}
\end{align}

We are interested in the dynamics of the phase differences (PDs)
$\theta_{ij}(t):=\theta_{i}(t)-\theta_{j}(t)$ between the oscillators.
Clearly only $m(m-1)/2$ of these quantities are independent
since $\theta_{ii}=0$ and $\theta_{ij}=-\theta_{ji}$.
As a shorthand notation, the collection of phase differences will be denoted by the corresponding uppercase symbol, i.e., $
\Theta=\{\theta_{ij}:i>j; \; i,j=\onetom\} \in \R^{m(m-1)/2}$,
and $\|\Theta\|$ then refers to the norm of in $\R^{m(m-1)/2}$.
Correspondingly, we talk about \emph{phase difference regions} $\mathscr{A}$ in $\R^{m(m-1)/2}$,
but also regard them as a subset of $\R^{m^2}$ subject to the constraints mentioned above. The following definition is a generalization of the phase-locking dynamics of the standard Kuramoto model \eqref{Kura}, as extended to \eqref{TVKR}.
Similar to $\theta_{ij}$, the notation $\phi_{ij}(t)=\phi_{i}(t)-\phi_{j}(t)$ stands for the phase differences for any solution $\phi(t)=\{ \phi_{i}(t)\}_{i=1}^{m}$ of \eqref{TVKR}. Motivated by the asymptotic stability of trajectories, also known as attractive trajectory and extreme stability \cite{LS}, we present the following definition.

\begin{definition}
The PD trajectories of the system (\ref{TVKR}) is said to be \emph{asymptotically stable within a phase difference region $\mathscr A\subset\R^{m\times (m-1)/2}$} if for any two solutions $\phi(t)$ and $\theta(t)$ of (\ref{TVKR}), the phase differences satisfy $\lim\limits_{t\to\infty}|\phi_{ij}(t)-\theta_{ij}(t)|=0$ for all $i,j$ whenever the initial conditions $\{\phi_{ij}(0)\}_{i>j},\{\theta_{ij}(0)\}_{i>j}$ belong to $\mathscr A$.
In addition, if the convergence is exponential, i.e., there exist positive constants $M$, $T$, and $\epsilon$ such that
\begin{align*}
|\phi_{ij}(t)-\theta_{ij}(t)|\le M\max_{i,j}|\phi_{ij}(0)-\theta_{ij}(0)|\exp(-\epsilon t)
\end{align*}
for  $t\ge T$ and all $i,j=\onetom$, then the PD trajectories of system (\ref{TVKR}) is said to be \emph{exponentially asymptotically stable within a phase difference region $\mathscr A$}.
\end{definition}

In this paper, for $r\in[0,\pi/2)$, we consider PD regions of the form
\begin{equation*}
\mathscr{A}^{r}=\{\theta_{ij} : |\theta_{ij}|\le r,~i>j\}.
\end{equation*}
Guaranteeing that the phase trajectory $\theta(t)$ starting from within $\mathscr{A}^{r}$ stays inside $\mathscr{A}^{r}$ requires some conditions, such as those given in the next lemma.

\begin{lemma}\label{lem0}
The set $\mathscr{A}^{r}$ is invariant for system \eqref{TVKR}
if
\begin{align}
&\omega_{i}(t)-\omega_{j}(t)-[a_{ij}(t)
+a_{ji}(t)]\sin(r)-\sum_{k\notin \Lambda_{ij}(t),k\ne i,j}\{[a_{ik}(t)]^{-}+[a_{jk}(t)]^{-}\}\sin(r)\nonumber\\
&-\sum_{k\in \Lambda_{ij}(t)}\min\{a_{ik}(t),a_{jk}(t)\}\sin(r)<0  \label{invariant}
\end{align}
for all $i\ne j$ and $t\ge 0$,
where $[a_{ij}(t)]$ is the time-varying weighted adjacency matrix and  $\Lambda_{ij}(t)$ is defined in \eqref{Lam}.
\end{lemma}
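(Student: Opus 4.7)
The plan is a boundary-escape argument. Assume for contradiction that some trajectory $\theta(t)$ of \eqref{TVKR} with $\theta(0)\in\mathscr{A}^r$ leaves $\mathscr{A}^r$, and let $t^{*}=\inf\{t>0:\theta(t)\notin\mathscr{A}^r\}$. By continuity of the flow and closedness of $\mathscr{A}^r$, $\theta(t^{*})$ lies on the boundary, and some ordered pair $(i,j)$ must satisfy $|\theta_{ij}(t^{*})|=r$ with $|\theta_{ij}|$ weakly increasing at $t^{*}$. Since \eqref{invariant} is imposed for every ordered pair $i\ne j$, after relabeling I may assume $\theta_i(t^{*})-\theta_j(t^{*})=r$, so $\dot\theta_{ij}(t^{*})\ge 0$, and the goal becomes to derive $\dot\theta_{ij}(t^{*})<0$ from \eqref{invariant}.

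At $t^{*}$, every remaining phase difference satisfies $|\theta_{k\ell}|\le r$, and combined with $\theta_i-\theta_j=r$ this forces the sharper one-sided ranges $\theta_k-\theta_i\in[-r,0]$ and $\theta_k-\theta_j\in[0,r]$ for every $k\ne i,j$. Using \eqref{TVKR} to compute $\dot\theta_{ij}=\dot\theta_i-\dot\theta_j$ and isolating the $k=i$ and $k=j$ terms, which contribute exactly $-[a_{ij}(t^{*})+a_{ji}(t^{*})]\sin(r)$, I obtain
\begin{equation*}
\dot\theta_{ij}(t^{*})=\omega_i(t^{*})-\omega_j(t^{*})-[a_{ij}(t^{*})+a_{ji}(t^{*})]\sin(r)+\sum_{k\ne i,j}F_k,
\end{equation*}
with $F_k:=a_{ik}(t^{*})\sin(\theta_k-\theta_i)-a_{jk}(t^{*})\sin(\theta_k-\theta_j)$, so the remainder reduces to a termwise upper bound on $F_k$.

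For $k\in\Lambda_{ij}(t^{*})$, both $a_{ik},a_{jk}>0$, and I view $F_k$ as $f(\alpha):=a_{ik}\sin(\alpha-r)-a_{jk}\sin(\alpha)$ with $\alpha:=\theta_k-\theta_j\in[0,r]$. On this interval $\sin(\alpha-r)\le 0$ and $\sin(\alpha)\ge 0$, so $f''(\alpha)=-a_{ik}\sin(\alpha-r)+a_{jk}\sin(\alpha)\ge 0$; thus $f$ is convex on $[0,r]$ and attains its maximum at an endpoint, giving $F_k\le\max\{f(0),f(r)\}=-\min(a_{ik},a_{jk})\sin(r)$. For $k\notin\Lambda_{ij}(t^{*})$, at least one of $a_{ik}(t^{*}),a_{jk}(t^{*})$ is nonpositive; a three-case sign analysis (both $\le 0$; only $a_{ik}>0$; only $a_{jk}>0$), exploiting the one-sided ranges so that a positive coupling paired with a sine of known sign contributes a nonpositive term, yields $F_k\le -\{[a_{ik}(t^{*})]^{-}+[a_{jk}(t^{*})]^{-}\}\sin(r)$.

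Summing these termwise bounds reproduces exactly the left-hand side of \eqref{invariant}, hence $\dot\theta_{ij}(t^{*})<0$, contradicting $\dot\theta_{ij}(t^{*})\ge 0$. The main obstacle lies in the very first estimation step: a naive pointwise bound such as $F_k\le(|a_{ik}|+|a_{jk}|)\sin(r)$ is far too weak to reproduce \eqref{invariant}, and the proof hinges on exploiting the identity $(\theta_k-\theta_i)-(\theta_k-\theta_j)=-r$ at $t^{*}$, which pins each sine argument into a half-interval on which the sign of $\sin$ is fixed and, crucially for the $k\in\Lambda_{ij}$ case, makes $F_k$ convex so that the endpoint values $-a_{ik}\sin(r)$ and $-a_{jk}\sin(r)$ bound it from above.
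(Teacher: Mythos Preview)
Your proof is correct and follows essentially the same boundary-escape strategy as the paper's proof in Appendix~A: both pick the extremal pair at the first exit time, exploit the one-sided ranges $\theta_k-\theta_i\in[-r,0]$ and $\theta_k-\theta_j\in[0,r]$, and bound each $F_k$ termwise to show $\dot\theta_{ij}(t^*)<0$. The only minor difference is in the $k\in\Lambda_{ij}$ case, where the paper factors out $\min(a_{ik},a_{jk})$ and then uses $\sin A+\sin B\ge\sin(A+B)$ for $A,B\ge 0$ with $A+B=r$, whereas your convexity argument reaches the same endpoint bound more directly.
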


This lemma is proved in Appendix A.
The following result follows by the lemma and is useful towards a robust condition for the invariance of $\mathscr A^{r}$, when, for instance, the time variation is not exactly known due to noise, unknown failures, or uncertainties.

\begin{proposition}
The set $\mathscr{A}^{r}$ is invariant for system \eqref{TVKR} if
\begin{align}\label{invar2}
\frac{\Delta\omega}{\sin(r)}\le\mu_{0}+\mu_{2}-\mu_{1},
\end{align}
where $\Delta\omega=\sup_{t}\max_{i,j}|\omega_{i}(t)-\omega_{j}(t)|$ is the maximum frequency displacement,
$$\mu_{0}=\sup_{t}\min_{i,j}\sum_{k\in\Lambda_{ij}(t)}\min\{a_{ik}(t),a_{jk}(t)\}$$
is the minimum value of ergodic coefficient of the graphs with the adjacency matrix $[a^{+}_{ij}(t)]$,
$$\mu_{1}=\sup_{t}\max_{i,j}\sum_{k\notin \Lambda_{ij}(t),k\ne i,j}\{-[a_{ik}(t)]^{-}-[a_{jk}(t)]^{-}\},$$ and $\mu_{2}=\sup_{t}\min_{i,j}[a_{ij}(t)+a_{ji}(t)]$.
\end{proposition}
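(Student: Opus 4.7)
My plan is to derive the proposition directly from Lemma~\ref{lem0} by showing that the hypothesis \eqref{invar2} forces the pointwise condition \eqref{invariant} to hold for every $i\ne j$ and every $t\ge 0$, at which point the desired invariance of $\mathscr A^r$ is immediate. The reduction works by dividing \eqref{invariant} through by $\sin(r)>0$ and identifying the left-hand side and each of the three summands on the right with the constants $\Delta\omega$, $\mu_2$, $\mu_0$, and $-\mu_1$, respectively.

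Concretely, I would first bound the left-hand side above by $\Delta\omega/\sin(r)$, using the definition of the maximum frequency displacement. Next, I would bound the edge contribution $a_{ij}(t)+a_{ji}(t)$ from below by $\mu_2$, and the common-neighbour ergodic term $\sum_{k\in\Lambda_{ij}(t)}\min\{a_{ik}(t),a_{jk}(t)\}$ from below by $\mu_0$. For the negative-coupling term I would use that each $[a_{\cdot k}(t)]^-\le 0$ to rewrite it as $-\sum_{k\notin\Lambda_{ij}(t),\,k\ne i,j}\{-[a_{ik}(t)]^- - [a_{jk}(t)]^-\}$, which is bounded below by $-\mu_1$ by definition. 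Assembling these three bounds turns the pointwise inequality \eqref{invariant} into exactly \eqref{invar2}, and Lemma~\ref{lem0} then delivers the conclusion.

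The step I expect to cause the most trouble — and would need to justify carefully — is reconciling the pointwise-in-$t$ character of Lemma~\ref{lem0}'s hypothesis with the fact that $\mu_0$ and $\mu_2$ are stated as $\sup_t$ rather than $\inf_t$: a genuine uniform lower bound on the time-varying right-hand side would naturally use an infimum. I would either interpret these constants under the implicit assumption that the minimising pair $(i,j)$ is time-independent, so that $\sup_t\min_{i,j}$ and $\min_{i,j}\sup_t$ coincide, or simply read $\sup_t$ as $\inf_t$ in those definitions; in either case the bounds above are uniform in $(i,j,t)$ and the argument goes through. A final minor wrinkle is the strict inequality in \eqref{invariant} versus the non-strict inequality in \eqref{invar2}, but this is resolved by a standard limiting argument that first establishes invariance of $\mathscr A^{r'}$ for every $r'>r$ and then passes to the limit $r'\to r$, using that $\mathscr A^r$ is closed.
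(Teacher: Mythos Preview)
Your approach is exactly the one the paper takes: the proposition is stated immediately after Lemma~\ref{lem0} with the remark that it ``follows by the lemma,'' and no further proof is given. Your identification of the $\sup_t$ versus $\inf_t$ issue in the definitions of $\mu_0$ and $\mu_2$ is a legitimate observation about what appears to be a slip in the statement rather than a gap in your argument; the intended reading is clearly $\inf_t$, and with that reading your bounding of each term and invocation of Lemma~\ref{lem0} is correct.
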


Both conditions (\ref{invariant}) and (\ref{invar2}) are in fact rather conservative. In the following, we assume the invariance of $\mathscr{A}^{r}$ and validate it through simulations.

We first consider the case of the nonnegative coupling coefficients and have the following result immediately as a consequence from \cite{LLC2013}.

\begin{theorem}\label{thm1}
Assume $a_{ij}(t) \ge 0$, $\forall i\ne j$ and $t\ge 0$. Suppose that $\mathscr{A}^{r}$ is invariant for \eqref{TVKR} for some $r\in[0,\pi/2)$.
Then the PD trajectories of system (\ref{TVKR}) are asymptotically stable within $\mathscr{A}^{r}$
if there exist $T>0$, and sequences $0=t_{1}< t_{2}<\cdots< t_{n}<\cdots$ and $\eta_{n}>0$, $n\in \mathbb{N},$ with $\sum_{n=1}^{\infty}\eta_{n}=+\infty$, such that when each time interval $[t_{n},t_{n+1}]$ is partitioned into $m-1$ time bins with
$$t_{n}=t_{n}^{0}<t_{n}^{1}<\cdots<t_{n}^{m-1}=\min\{t_{n},t_{n-1}+T\},$$
then the $\eta_{n}$-graph corresponding to the Laplacian matrix $Z^{k,n}=[z^{k,n}_{ij}]$ with components
\begin{align*}
z^{k,n}_{ij}= -\int_{t_{n}^{k}}^{t_{n}^{k+1}}a_{ij}(s)\,ds, \; i\ne j; \qquad
z^{k,n}_{ii} = -\sum_{j\ne i}z^{n}_{ij}(t),
\end{align*}
has a spanning tree for all $k=1,\dots,m-1$ and $n\in \mathbb{N}$.
\end{theorem}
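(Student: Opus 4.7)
My plan is to reduce the nonlinear phase dynamics to a linear time-varying consensus problem on the error variable and invoke the consensus theorem of \cite{LLC2013}. Let $\phi(t)$ and $\theta(t)$ be two solutions whose PDs start in $\mathscr{A}^{r}$, and set $e_{i}(t)=\phi_{i}(t)-\theta_{i}(t)$. Subtracting the two copies of \eqref{TVKR} cancels $\omega_{i}(t)$ and yields
\begin{equation*}
\dot{e}_{i}=\sum_{j=1}^{m}a_{ij}(t)\bigl[\sin(\phi_{j}-\phi_{i})-\sin(\theta_{j}-\theta_{i})\bigr].
\end{equation*}
Using the integral form of the mean value theorem $\sin A-\sin B=(A-B)\int_{0}^{1}\cos((1-s)B+sA)\,ds$, with $A=\phi_{j}-\phi_{i}$, $B=\theta_{j}-\theta_{i}$, I can rewrite this as $\dot{e}_{i}=\sum_{j}\tilde{a}_{ij}(t)(e_{j}-e_{i})$, where $\tilde{a}_{ij}(t)=a_{ij}(t)\,c_{ij}(t)$ and $c_{ij}(t)=\int_{0}^{1}\cos\bigl((1-s)(\theta_{j}-\theta_{i})+s(\phi_{j}-\phi_{i})\bigr)\,ds$. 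Because $\mathscr{A}^{r}$ is invariant for both trajectories, the argument of the cosine stays in $[-r,r]$, so $c_{ij}(t)\ge\cos(r)>0$ uniformly in $t,i,j$.

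Let $\tilde{L}(t)$ denote the Laplacian of the nonnegative weight matrix $[\tilde{a}_{ij}(t)]$, so that the error dynamics take the consensus form $\dot{e}=-\tilde{L}(t)e$. On each bin $[t_{n}^{k},t_{n}^{k+1}]$ and for $i\ne j$,
\begin{equation*}
-\int_{t_{n}^{k}}^{t_{n}^{k+1}}\tilde{l}_{ij}(s)\,ds=\int_{t_{n}^{k}}^{t_{n}^{k+1}}a_{ij}(s)c_{ij}(s)\,ds\ge\cos(r)\int_{t_{n}^{k}}^{t_{n}^{k+1}}a_{ij}(s)\,ds=-\cos(r)\,z_{ij}^{k,n}.
\end{equation*}
Hence whenever $z_{ij}^{k,n}<-\eta_{n}$, the integrated off-diagonal entry of $\tilde{L}$ satisfies $-\int\tilde{l}_{ij}(s)\,ds>\cos(r)\,\eta_{n}$; equivalently, the $(\cos(r)\eta_{n})$-graph of $\int_{t_{n}^{k}}^{t_{n}^{k+1}}\tilde{L}(s)\,ds$ contains the $\eta_{n}$-graph of $Z^{k,n}$ and therefore also possesses a spanning tree for every $k=1,\dots,m-1$ and $n\in\mathbb{N}$. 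Since $\sum_{n}\cos(r)\eta_{n}=\cos(r)\sum_{n}\eta_{n}=\infty$, the hypotheses of the linear consensus theorem of \cite{LLC2013} are met for $\dot{e}=-\tilde{L}(t)e$, so $e_{i}(t)-e_{j}(t)\to 0$, which is precisely $\phi_{ij}(t)-\theta_{ij}(t)\to 0$ as required.

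The main obstacle is the step that transfers the spanning-tree hypothesis on the original couplings $a_{ij}$ to the state-dependent effective weights $\tilde{a}_{ij}(t)$. This succeeds precisely because $r<\pi/2$ and $\mathscr{A}^{r}$ is invariant: the invariance keeps $c_{ij}(t)$ uniformly bounded below by $\cos(r)>0$, so the rescaling $\tilde{a}_{ij}(t)\ge\cos(r)\,a_{ij}(t)$ preserves both the nonnegativity needed for a bona fide Laplacian and the joint-connectivity structure demanded by \cite{LLC2013}. If $r$ were allowed to reach $\pi/2$, or if $\mathscr{A}^{r}$ were not invariant, the lower bound on $c_{ij}$ could vanish along the trajectory and the linearization would degenerate; everything else in the argument is a direct translation between the two systems.
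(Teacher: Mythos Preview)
Your proof is correct and follows essentially the same route as the paper: linearize the difference of two solutions to obtain a linear time-varying consensus system with effective weights $a_{ij}(t)\cdot(\text{cosine factor})$, use the invariance of $\mathscr{A}^{r}$ to bound the cosine factor below by $\cos(r)$, transfer the spanning-tree hypothesis with thresholds $\cos(r)\eta_{n}$, and invoke \cite{LLC2013}. The only cosmetic difference is that the paper applies the classical mean value theorem to produce a single intermediate point $\zeta_{ij}(t)$, whereas you use the integral form $\sin A-\sin B=(A-B)\int_{0}^{1}\cos((1-s)B+sA)\,ds$; your version has the minor advantage that $c_{ij}(t)$ is manifestly continuous in $t$ without any measurable-selection concern, but substantively the two arguments are the same.
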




\begin{proof}
Consider two solutions $\theta, \phi$ of (\ref{TVKR}).
The differences $\delta_{i}=\phi_{i}-\theta_{i}$ between the two solutions evolve by the equations
\begin{equation*}
\dot{\delta}_{i}=\sum_{j=1}^{m}a_{ij}(t)[\sin(\phi_{j}(t)-\phi_{i}(t))
-\sin(\theta_{j}(t)-\theta_{i}(t))].
\end{equation*}
Denoting the phase differences by  $\phi_{ij}=\phi_{i}-\phi_{j}$ and $\theta_{ij}=\theta_{i}-\theta_{j}$,
by the mean value theorem there exist numbers
$\zeta_{ij}\in[\min(\theta_{ij},\phi_{ij}),\max(\theta_{ij},\phi_{ij})]$ such that
\begin{equation}
\dot{\delta}_{i}=\sum_{j=1}^{m}[a_{ij}(t)\cos(\zeta_{ji}(t))](\delta_{j}-\delta_{i}),
\quad i=\onetom.\label{linear}
\end{equation}
It can be seen that $\cos\zeta_{ij}=\cos\zeta_{ji}$ for all $i,j$.
Let $B(t)=[b_{ij}(t)]$ with $b_{ij}(t)=-a_{ij}(t)\cos(\zeta_{ji}(t))$, which is nonpositive for $i\ne j$ and $b_{ii}(t)=-\sum_{j\ne i}b_{ij}(t)$. Since $\{\theta_{ij}(t)\}_{i>j}$ and $\{\phi_{ij}(t)\}_{i>j}$ belong to $\mathscr{A}^{r}$ for all $t\ge 0$, we have $\{\zeta_{ij}(t)\}_{i>j}\in\mathscr{A}^{r}$  for all $t\ge 0$. Hence, the $\eta_{n}\cos(r)$-graph with the Laplacian $\int_{t_{n}^{k}}^{t_{n}^{k+1}}B(s)ds$ has a spanning tree for all $k=1,\dots,m-1$ and $n\in \mathbb{N}$.
Theorem 1 in \cite{LLC2013} shows that
\begin{equation*}
\lim\limits_{t\to\infty}|\delta_{i}(t)-\delta_{j}(t)|=0, \quad \forall~i,j=\onetom.
\end{equation*}
Note that $$\phi_{ij}-\theta_{ij}=\phi_{i}-\phi_{j}-\theta_{i}+\theta_{j}=\delta_{i}-\delta_{j}.$$ Hence, $\lim_{t\to\infty}[\phi_{ij}(t)-\theta_{ij}(t)]=0$ for all $i,j$, which completes the proof.
\end{proof}

The following corollary is a direct consequence of Theorem \ref{thm1}; see also \cite[Theorem 1]{Mor} and \cite[Theorem 31]{YLC2013}.

\begin{corollary}\label{cor1}Assume $a_{ij}(t) \ge 0$, $\forall i,j$. Let $r\in[0,\pi/2)$, and suppose that $\mathscr{A}^{r}$ is invariant for \eqref{TVKR}.
Then the PD trajectories of \eqref{TVKR} are exponentially asymptotically stable within $\mathscr{A}^{r}$
if there exist $T>0$ and $\eta>0$ such that the $\eta$-graph corresponding to the Laplacian matrix $Z(t)=[z_{ij}(t)]$ with components
\begin{align}\label{Z}
z_{ij}(t)=\begin{cases}
-\int_{t}^{t+T}a_{ij}(s)\,ds, & i\ne j,\\
-\sum_{j\ne i}z_{ij}(t), & i=j,
 \end{cases}
 \end{align}
has a spanning tree for all $t\ge 0$.
\end{corollary}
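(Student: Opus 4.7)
The plan is to reduce the corollary to an exponential consensus problem on a linearized error system, and to invoke a standard exponential consensus theorem under uniform joint connectivity. First, I would repeat the linearization step from the proof of Theorem~\ref{thm1}: for any two solutions $\theta,\phi$ of \eqref{TVKR} whose phase differences lie in $\mathscr{A}^r$, set $\delta_i=\phi_i-\theta_i$ and invoke the mean value theorem to produce numbers $\zeta_{ji}(t)$ with $|\zeta_{ji}(t)|\le r$ such that
\begin{equation*}
\dot\delta_i=\sum_{j=1}^{m}a_{ij}(t)\cos(\zeta_{ji}(t))(\delta_j-\delta_i),\quad i=\onetom.
\end{equation*}
Writing $B(t)=[b_{ij}(t)]$ with $b_{ij}(t)=-a_{ij}(t)\cos(\zeta_{ji}(t))$ for $i\ne j$ and $b_{ii}=-\sum_{j\ne i}b_{ij}$ casts the disagreement dynamics as a linear consensus system $\dot\delta=-B(t)\delta$ governed by a time-varying Laplacian with nonpositive off-diagonal entries.

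Next, I would transfer the $\eta$-graph spanning-tree hypothesis on $Z(t)$ into a uniform connectivity property for $\int_t^{t+T}B(s)\,ds$. Since $r<\pi/2$ forces $\cos(\zeta_{ji}(s))\ge\cos(r)>0$ for all $s$, one gets $-\int_t^{t+T}b_{ij}(s)\,ds\ge\cos(r)\bigl[-z_{ij}(t)\bigr]$ for every $i\ne j$ and every $t\ge 0$. Hence every edge present in the $\eta$-graph of $Z(t)$ is also present in the $\eta\cos(r)$-graph of $\int_t^{t+T}B(s)\,ds$, and the spanning-tree property is inherited uniformly in $t$ with threshold $\eta\cos(r)$. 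Then I would invoke the uniform-rate consensus theorem cited in the statement, namely \cite[Theorem~1]{Mor} together with \cite[Theorem~31]{YLC2013}: under such a uniform window-wise spanning-tree bound with a positive connectivity threshold, the linear consensus dynamics $\dot\delta=-B(t)\delta$ drives every disagreement $\delta_i(t)-\delta_j(t)$ to zero exponentially, with constants $M,\epsilon>0$ depending only on $m,T,\eta,r$. Because $\phi_{ij}-\theta_{ij}=\delta_i-\delta_j$, this is precisely the inequality in the definition of exponential asymptotic stability within $\mathscr{A}^r$.

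The main obstacle is the upgrade from asymptotic to exponential convergence. Theorem~\ref{thm1} is built on a divergence condition $\sum_n\eta_n=+\infty$ on a sequence of sub-bin connectivities, which in general yields no rate. The corollary strengthens this to a uniform-in-$t$ lower bound $\eta$ on every window $[t,t+T]$, and the delicate point is to make sure that this stronger hypothesis feeds into a version of the underlying consensus theorem that actually produces an exponential contraction factor per window, rather than merely a summable non-expansion estimate. Once that uniform contraction factor is secured from the cited references, the rest of the argument — linearization, connectivity transfer via $\cos(r)$, and conversion back to phase differences — is essentially routine.
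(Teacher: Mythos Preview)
Your proposal is correct and matches the paper's approach: the paper states that Corollary~\ref{cor1} is a direct consequence of Theorem~\ref{thm1} together with \cite[Theorem~1]{Mor} and \cite[Theorem~31]{YLC2013}, and your write-up fleshes out precisely this route---the same linearization to $\dot\delta=-B(t)\delta$, the same $\cos(r)$ transfer of the $\eta$-threshold spanning-tree condition from $Z(t)$ to $\int_t^{t+T}B(s)\,ds$, and the same appeal to the cited uniform-window consensus results for the exponential rate.
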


\begin{remark}
Theorem \ref{thm1} and Corollary \ref{cor1} can be further extended, for instance to the case when there are two or more disjoint node subsets such that in the subgraph of each subset the conditions of Theorem 1 hold. Then one can conlude that in each node subset the PD trajectories are asymptotically stable; however, the PDs between oscillators in different node subset may fail to be stable.
\end{remark}

We next consider the case when the coupling coefficients $a_{ij}(t)$ are allowed to have negative values. We can prove the following result by employing a matrix measure similar to the one proposed in \cite{LC2008}.

\begin{theorem}\label{thm2}
Let $r\in[0,\pi/2)$, and suppose that $\mathscr{A}^{r}$ is invariant for \eqref{TVKR}. Let
\begin{eqnarray}
c_{ij}^{r}(t)&=&\begin{cases}[a_{ij}(t)+a_{ji}(t)]\cos(r)&a_{ij}(t)+a_{ji}(t)>0\\
a_{ij}(t)+a_{ji}(t)& a_{ij}(t)+a_{ji}(t)\le 0\end{cases},\nonumber\\
\tilde{a}_{ij}^{r}&=&\begin{cases}a_{ik}(t)\cos(r)& a_{ik}(t)>0\\
a_{ik}(t)& a_{ik}(t)\le0\end{cases}.\label{Lr}
\end{eqnarray}
Define the index
\begin{eqnarray}
\xi(L(t),r) := -\min_{i\ne j}\{c_{ij}^{r}(t)+\sum_{k\ne i,j}\min(\tilde{a}_{ik}^{r}(t),\tilde{a}_{jk}^{r}(t))\}.  \label{xi}
\end{eqnarray}
If there exists $T>0$ and $\eta>0$ such that $(1/T)\int_{t+T}^{T}\xi(L(s),r)ds\le- \eta$ for all $t$,
 then  the PD trajectories of the coupled system \eqref{TVKR} are exponentially asymptotically stable within $\mathscr{A}^{r}$.
\end{theorem}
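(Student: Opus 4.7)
The plan is to reduce the problem to the same linear time-varying system that appeared in the proof of Theorem \ref{thm1} and then to bound the rate of contraction of a disagreement semi-norm by a matrix-measure-type quantity, in the spirit of \cite{LC2008}. Concretely, for two solutions $\theta(t),\phi(t)$ of \eqref{TVKR} whose PDs lie in $\mathscr{A}^{r}$ for all $t\ge 0$, the mean value theorem yields
\begin{align*}
\dot{\delta}_{i}(t)=\sum_{j=1}^{m}w_{ij}(t)\,(\delta_{j}-\delta_{i}),\qquad w_{ij}(t):=a_{ij}(t)\cos(\zeta_{ji}(t)),
\end{align*}
where $\delta_{i}:=\phi_{i}-\theta_{i}$ and $|\zeta_{ji}(t)|\le r$ by invariance of $\mathscr{A}^{r}$, so that $\cos(\zeta_{ji}(t))\in[\cos r,1]$ and $\cos\zeta_{ij}=\cos\zeta_{ji}$. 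Since $\phi_{ij}-\theta_{ij}=\delta_{i}-\delta_{j}$, it suffices to show exponential decay of the disagreement semi-norm $V(t):=\max_{i}\delta_{i}(t)-\min_{j}\delta_{j}(t)$.

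The core step is a Dini-derivative estimate for $V(t)$. At each $t$ I pick indices $i^{*},j^{*}$ attaining the max and the min (a Danskin-type argument handles points where the optimizers change), introduce $u_{k}:=\delta_{i^{*}}-\delta_{k}\ge 0$ and $v_{k}:=\delta_{k}-\delta_{j^{*}}\ge 0$ satisfying $u_{k}+v_{k}=V$, and isolate the $k=j^{*}$ term in $\dot{\delta}_{i^{*}}$ and the $k=i^{*}$ term in $\dot{\delta}_{j^{*}}$ to obtain
\begin{align*}
D^{+}V(t)=-V\bigl(w_{i^{*}j^{*}}+w_{j^{*}i^{*}}\bigr)+\sum_{k\ne i^{*},j^{*}}\bigl(-w_{i^{*}k}u_{k}-w_{j^{*}k}v_{k}\bigr).
\end{align*}
The crucial observation is that, irrespective of the signs of $w_{i^{*}k},w_{j^{*}k}$, the linear functional $(u,v)\mapsto -w_{i^{*}k}u-w_{j^{*}k}v$ restricted to the segment $\{u+v=V,\;u,v\ge 0\}$ attains its maximum at a vertex, giving the sign-free linear-programming bound $-w_{i^{*}k}u_{k}-w_{j^{*}k}v_{k}\le -V\min(w_{i^{*}k},w_{j^{*}k})$.

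To finish, I lower-bound $w_{ij}(t)$ by the truncated coefficients in the theorem: a case split on the sign of $a_{ij}(t)$ combined with $\cos\zeta_{ji}\in[\cos r,1]$ gives $w_{ij}(t)\ge\tilde{a}_{ij}^{r}(t)$, and an analogous split on the sign of $a_{ij}+a_{ji}$ yields $w_{i^{*}j^{*}}+w_{j^{*}i^{*}}\ge c_{i^{*}j^{*}}^{r}(t)$. Plugging these into the bracket above and recognizing the definition of $\xi$ as a minimum over pairs $(i,j)$ produces $D^{+}V(t)\le\xi(L(t),r)\,V(t)$, so the comparison lemma yields $V(t+T)\le V(t)\exp\bigl(\int_{t}^{t+T}\xi(L(s),r)\,ds\bigr)\le V(t)e^{-\eta T}$. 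Iterating over successive windows of length $T$, and using local boundedness of $\xi$ inside one window to fill in the intermediate times, gives $V(t)\le Me^{-\epsilon t}V(0)$ for explicit constants $M,\epsilon>0$ and hence the exponential PD estimate $|\phi_{ij}(t)-\theta_{ij}(t)|\le V(t)$. The main obstacle will be the sign-free linear-programming estimate in the second paragraph and the attendant Dini-derivative bookkeeping at optimizers that may jump with $t$; once those are in hand, the remaining steps are routine case splits on the sign of the $a_{ij}$ together with a Grönwall argument.
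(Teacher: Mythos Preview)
Your proposal is correct and follows essentially the same approach as the paper: the paper also reduces to the linear system for $\delta_i$, takes $V(\delta)=\max_i\delta_i-\min_j\delta_j$, bounds the derivative at the extremal indices by the same case split on the sign of $a_{ij}$ to pass from $a_{ij}\cos\zeta_{ji}$ to $\tilde a_{ij}^r$ and $c_{ij}^r$, uses the same ``take the minimum of the two coefficients'' step to collapse $-\tilde a_{i^*k}^r u_k-\tilde a_{j^*k}^r v_k$ into $-V\min(\tilde a_{i^*k}^r,\tilde a_{j^*k}^r)$, and then integrates $dV/dt\le\xi(L(t),r)V$. Your write-up is in fact a bit more careful than the paper's about the Dini-derivative/Danskin bookkeeping and about articulating the linear-programming nature of the key inequality.
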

\begin{proof}
Define $V(\delta)=\max_{i}\delta_{i}-\min_{j}\delta_{j}$. Let $\delta_{i}(t)$ be a solution of (\ref{linear}).
For any $t\ge 0$, let $i^{*}$ be any index such that $\delta_{i^{*}}=\max_{i}\delta_{i}(t)$, and similarly, $i_{*}$ be any index such that $\delta_{i_{*}}=\min_{i}\delta_{i}(t)$.
Note that $i_{*}$ and $i^{*}$ are time-varying. Then,
\begin{eqnarray*}
&&\frac{d[\delta_{i^{*}}-\delta_{i_{*}}]}{d\tau}\left|_{\tau=t}\right.\\
&&=\sum_{j=1}^{m}a_{i^{*}j}\cos(\zeta_{i^{*}j}(t))[\delta_{j}(t)-\delta_{i^{*}}(t)]-
\sum_{k=1}^{m}a_{i_{*}k}\cos(\zeta_{i_{*}k}(t))[\delta_{k}(t)-\delta_{i_{*}}(t)]\\
&&=(-a_{i^{*}i_{*}}-a_{i_{*}i^{*}})\cos(\zeta_{i^{*}i_{*}}(t))
[\delta_{i^{*}}(t)-\delta_{i_{*}}(t)]\\
&&-\sum_{j\ne i_{*},i^{*}}a_{i^{*}j}\cos(\zeta_{i^{*}j}(t))[\delta_{i^{*}}(t)-\delta_{j}(t)]-\sum_{k\ne i_{*},i^{*}}a_{i_{*}k}\cos(\zeta_{i_{*}j}(t))[\delta_{k}(t)-\delta_{i_{*}}(t)]\\
&&\le-c_{ij}^{r}(t)[\delta_{i^{*}}(t)-\delta_{i_{*}}(t)]-\sum_{j\ne i^{*},i_{*}}\tilde{a}_{i^{*}j}^{r}[\delta_{i^{*}}(t)-\delta_{j}(t)]-
\sum_{k\ne i^{*},i_{*}}\tilde{a}_{i_{*}k}^{r}[\delta_{k}(t)-\delta_{i_{*}}(t)]\\
&&\le-\left\{c_{ij}^{r}(t)+\sum_{j\ne i^{*},i_{*}}\min(\tilde{a}_{i^{*}j}^{r},\tilde{a}_{i_{*}j}^{r})\right\}
[\delta_{i^{*}}(t)-\delta_{i_{*}}(t)]\le\xi(L(t),r)V(\delta(t)),
\end{eqnarray*}
where the $\zeta_{ij}$ are defined in (\ref{linear}) and satisfy $\cos(\zeta_{ij})=\cos(\zeta_{ji})$. Since the above holds for all $i^{*}$ and $i_{*}$ that pick the maximum and minimum of $\delta_{i}(t)$, we have
\begin{eqnarray*}
\frac{dV(\delta(\tau))}{d\tau}\left|_{\tau=t}\right.\le \xi(L(t),r)V(\delta(t)),
\end{eqnarray*}
which implies
\begin{eqnarray*}
V(\delta(t))\le\exp\left(\int_{0}^{t}\xi(L(s),r)ds\right)V(\delta(0)).
\end{eqnarray*}
%
The condition $(1/T)\int_{t+T}^{T}\xi(L(s),r)ds\le- \eta$ implies that $\int_{0}^{\infty}\xi(L(s),r)ds=-\infty$. Therefore, $\lim_{t\to\infty}V(\delta(t))=0$ holds uniformly, and so $\lim_{t\to\infty}[\delta_{i}(t)-\delta_{j}(t)]=0$ uniformly for all $i,j$.
The proof is completed by the same arguments as in the proof of Theorem \ref{thm1}.
\end{proof}

\begin{remark}
By the transformation (\ref{linear}), the asymptotic stability of the phase difference trajectories corresponds to the synchronization of the time-varying system (\ref{linear}). The method of the proof of Theorem \ref{thm2} is analogous to the Hajnal diameter approach used in \cite{LAJ1,LAJ2,YLC2013}.
\end{remark}

Finally, we consider the case that $L(t)$ is symmetric and positive semidefinite but some elements $a_{ij}(t)$ ($i\ne j$) may be negative. To this end, we need the following lemmas.

\begin{lemma}\label{lem1}
Let $L=[l_{ij}]_{i,j=1}^{m}\in\R^{m,m}$ be a symmetric square matrix satisfying (i) all row sums equal to zero, and (ii) zero is a simple eigenvalue. Let $r\in[0,\pi/2)$ and define the matrix $\tilde{L}^{r}=[\widetilde{l}^{r}_{ij}]_{i,j=1}^{m}$ by
\begin{eqnarray}
\widetilde{l}^{r}_{ij}=\begin{cases}l_{ij}\cos(r), & i\ne j,~l_{ij}\le 0\\
l_{ij}, & i\ne j,~l_{ij}>0\\
-\sum_{k\ne i}\widetilde{l}_{ik}^{r}, & i=j\end{cases}.\label{tilder}
\end{eqnarray}
Let $\chi_{1}$ and $\chi_{2}$ denote the smallest eigenvalues of $L$ and $\widetilde{L}^{r}$, respectively, over the eigenspace orthogonal to ${\bf 1}=[1,\dots,1]^{\top}\in \mathbb{R}^{m}$. Then
$
\chi_{1}\ge\chi_{2}.
$
\end{lemma}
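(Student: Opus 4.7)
The plan is to invoke the Courant--Fischer variational characterization of the smallest eigenvalue on $\mathbf{1}^{\perp}$. Since $L$ and $\tilde{L}^{r}$ are both symmetric and, by construction of their diagonals, have all row sums equal to zero, $\mathbf{1}$ is a null vector of each, and $\mathbf{1}^{\perp}$ is an invariant subspace of both operators. Consequently,
$$\chi_{1}=\min_{\substack{x\perp\mathbf{1}\\ \|x\|=1}}x^{\top}Lx,\qquad \chi_{2}=\min_{\substack{x\perp\mathbf{1}\\ \|x\|=1}}x^{\top}\tilde{L}^{r}x,$$
so the entire lemma reduces to the pointwise comparison $x^{\top}Lx\ge x^{\top}\tilde{L}^{r}x$ for all $x\in\R^{m}$, that is, to the positive semidefiniteness of $L-\tilde{L}^{r}$.

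To obtain the pointwise inequality I would use the standard ``edge-sum'' identity available for any symmetric matrix $M=[m_{ij}]$ with zero row sums,
$$x^{\top}Mx=-\sum_{i<j}m_{ij}(x_{i}-x_{j})^{2},$$
applied to the difference $M=L-\tilde{L}^{r}$, which inherits the zero-row-sum property from the definition of the diagonal of $\tilde{L}^{r}$. Reading the off-diagonal coefficient $-(l_{ij}-\tilde{l}^{r}_{ij})$ off from (\ref{tilder}) is now mechanical: for $l_{ij}>0$ the two matrices agree and the coefficient is $0$, while for $l_{ij}\le 0$ it equals $|l_{ij}|(1-\cos r)\ge 0$ because $r\in[0,\pi/2)$. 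Every term in the sum is therefore nonnegative, giving $x^{\top}(L-\tilde{L}^{r})x\ge 0$, and hence $\chi_{1}\ge\chi_{2}$.

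I do not anticipate a serious obstacle; the exercise is essentially a Rayleigh-quotient comparison carried out through the Laplacian quadratic form. The main discipline is simply not to confuse sign conventions: the negative off-diagonal entries of $L$ are the ones coming from positive couplings, and these are precisely the entries being damped by the factor $\cos r$, while the positive off-diagonal entries (arising from negative couplings) are left untouched. It is also worth noting that hypothesis (ii), the simplicity of the zero eigenvalue of $L$, is not used in the comparison itself; it only guarantees that $\chi_{1}$ really is the second-smallest eigenvalue of $L$ rather than another copy of $0$, and the pointwise inequality goes through whether or not $\tilde{L}^{r}$ shares this simplicity.
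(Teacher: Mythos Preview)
Your proof is correct and follows essentially the same route as the paper's: both reduce the claim to the positive semidefiniteness of $L-\tilde{L}^{r}$ and verify it by noting that this difference is a symmetric zero-row-sum matrix with nonpositive off-diagonal entries (equivalently, $\tilde{L}^{r}-L$ is Metzler with zero row sums, hence diagonally dominant and negative semidefinite). The only cosmetic difference is that you spell out the Laplacian quadratic form via the edge-sum identity, whereas the paper invokes the Metzler/diagonal-dominance characterization in one line.
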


The proof of this lemma is given in Appendix B.

\begin{lemma} \label{thmX}
Let $G(t)$ be a symmetric matrix with piecewise continuous elements,
such that $G(t)$ is positive semidefinite, has all row sums equal to zero, and there exists $R>0$ such that $\|G(t)\|\le R$, $\forall t\ge 0$.
Let $\bar{G}(t,s)=(1/(t-s))\int_{s}^{t}G(\tau)d\tau$ and define $\widetilde{\bar{G}}^{r}(t,s)$ analogously to \eqref{tilder}.
Suppose there exists $h>0$ such that $\sum_{k=0}^{\infty}\beta_{k}=+\infty$, where
\begin{eqnarray*}
\beta_{k}=\lambda_{2}(\tilde{\bar{G}}^{r}((k+1)h, kh)).
\end{eqnarray*}
Then the time-varying linear system
\begin{eqnarray}
	\dot{x}=-G(t)x  \label{ma}
\end{eqnarray}
reaches consensus, i.e., $\lim_{t\to\infty}[x_{i}(t)-x_{j}(t)]=0$ \ $\forall i,j$, where the $x_{i}$ denote the components of $x \in \mathbb{R}^{m}$.
If, in addition,  there exists $\hat{\beta}>0$ such that $\beta_{k}>\hat{\beta}$ for all $k\in\mathbb{Z}^+$, then the convergence is exponential.
\end{lemma}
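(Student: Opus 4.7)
The plan is a windowed Lyapunov argument that uses Lemma~\ref{lem1} to pass from the averaged Laplacian $\bar G$ to its cosine-modified version $\tilde{\bar G}^{r}$. First, since $G(t)$ is symmetric with zero row sums, $\mathbf{1}^{\top}x(t)$ is conserved along \eqref{ma}, so after subtracting the initial mean I may assume $\mathbf{1}^{\top}x(t)\equiv 0$. Setting $V(t)=\|x(t)\|^{2}$ gives $\dot V=-2x^{\top}G(t)x\le 0$, so $V$ is nonincreasing and it suffices to prove $V(t)\to 0$ (resp.\ a geometric rate of decay under $\beta_{k}\ge\hat\beta$).

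Next I would fix $k\in\mathbb{Z}^{+}$, write $y=x(kh)$ and $I_{k}=\int_{kh}^{(k+1)h}x^{\top}G(\tau)x\,d\tau$, so that $V((k+1)h)=V(kh)-2I_{k}$. The core step is to compare $I_{k}$ with the \emph{frozen-state} integral $h\,y^{\top}\bar G((k+1)h,kh)y$. From $G\succeq 0$ and $\|G(\tau)\|\le R$ one has $G(\tau)^{2}\preceq RG(\tau)$, hence $\|\dot x(\tau)\|^{2}\le R\,x^{\top}G(\tau)x$, and Cauchy--Schwarz applied to $x(\tau)-y=\int_{kh}^{\tau}\dot x\,ds$ yields $\|x(\tau)-y\|^{2}\le hR\,I_{k}$ uniformly on the window. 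Expanding $y^{\top}G(\tau)y=x^{\top}G(\tau)x-2x^{\top}G(\tau)(x-y)+(x-y)^{\top}G(\tau)(x-y)$ and applying Cauchy--Schwarz to the positive semidefinite bilinear form $(u,v)\mapsto u^{\top}G(\tau)v$ then produces, after integration, the clean bound $h\,y^{\top}\bar G((k+1)h,kh)y\le(1+hR)^{2}I_{k}$.

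With this in hand I would invoke Lemma~\ref{lem1} on $\bar G((k+1)h,kh)$, which is symmetric, positive semidefinite, and has zero row sums, to obtain $\lambda_{2}(\bar G)\ge\lambda_{2}(\tilde{\bar G}^{r})=\beta_{k}$; the degenerate case $\beta_{k}=0$ is absorbed by the trivial bound $V((k+1)h)\le V(kh)$. Since $y\perp\mathbf{1}$, this gives $I_{k}\ge h\beta_{k}(1+hR)^{-2}V(kh)$, and therefore $V((k+1)h)\le[1-2h\beta_{k}(1+hR)^{-2}]_{+}V(kh)$. Telescoping this estimate and using $[1-u]_{+}\le e^{-u}$ for $u\ge 0$, I would conclude that $V(Kh)\le V(0)\exp(-2h(1+hR)^{-2}\sum_{k=0}^{K-1}\beta_{k})\to 0$ under $\sum_{k}\beta_{k}=+\infty$; under the uniform lower bound $\beta_{k}\ge\hat\beta$ the same inequality is geometric in $K$, yielding exponential decay.

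The hard part will be the frozen-state comparison: a naive Euclidean estimate of $x(\tau)-y$ leaves a residual of order $h^{2}R^{2}V(kh)$ inside $I_{k}$ that cannot be absorbed when $\beta_{k}$ is small. Measuring the mismatch in the degenerate seminorm induced by $G(\tau)$, which is what $G^{2}\preceq RG$ makes possible, is what produces the self-similar factor $(1+hR)^{2}$ and keeps the telescope effective even when the spectral gap is weak. The remainder of the argument is a routine consensus summation.
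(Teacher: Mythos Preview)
Your proposal is correct and follows essentially the same route as the paper's proof in Appendix~C: both reduce to the zero-mean subspace, use $V=\|x\|^{2}$ (the paper via an orthogonal change of coordinates and the state-transition operator), exploit $G(\tau)^{2}\preceq R\,G(\tau)$ to bound $\|x(\tau)-x(kh)\|^{2}\le hR\,I_{k}$, and then combine this with the $G^{1/2}$-weighted triangle inequality to obtain the identical comparison $h\,y^{\top}\bar G\,y\le(1+hR)^{2}I_{k}$ before telescoping. The only cosmetic differences are that the paper works with $C(t)=P^{\top}G(t)P|_{\mathbf{1}^{\perp}}$ and phrases the key step in terms of the spectral radius of $U^{\top}U$, and it leaves the invocation of Lemma~\ref{lem1} implicit where you make it explicit.
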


The proof of this lemma is given in Appendix C.

Thus, when $L(t)$ is symmetric and positive semidefinite, the next result follows. 

\begin{theorem}\label{thm3}
Suppose that $\mathscr{A}^{r}$ is invariant for the system \eqref{TVKR} for some $r\in[0,\pi/2)$ and $L(t)$ is symmetric and positive semidefinite for all $t\ge 0$. Let $\bar{L}(t,s)=(1/(t-s))\int_{s}^{t}L(\tau)d\tau$ and
\begin{eqnarray*}
\alpha_{k}(h)=\lambda_{2}(\tilde{\bar{L}}^{r}(kh,(k+1)h))
\end{eqnarray*}
for some $h>0$, where $\tilde{\bar{L}}^{r}(kh,(k+1)h)$ is defined analogously to \eqref{tilder}.
If there exists $h>0$ such that
$\sum_{k=0}^{\infty}\alpha_{k}(h)=+\infty$, then the PD trajectories of the system \eqref{TVKR} are asymptotically stable within $\mathscr A^{r}$.
\end{theorem}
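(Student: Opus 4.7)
The plan is to follow the reduction step of the proof of Theorem \ref{thm1} and then replace its external citation by an internal argument that combines Lemmas \ref{lem1} and \ref{thmX}. Taking two solutions $\theta,\phi$ of \eqref{TVKR} inside $\mathscr A^r$, I set $\delta_i=\phi_i-\theta_i$ and apply the mean value theorem as in the proof of Theorem \ref{thm1} to each $\sin(\phi_j-\phi_i)-\sin(\theta_j-\theta_i)$. This yields the linear variational equation $\dot\delta=-B(t)\delta$ with $b_{ij}(t)=-a_{ij}(t)\cos\zeta_{ji}(t)$ for $i\ne j$ and $\zeta_{ji}(t)$ between $\theta_{ji}(t)$ and $\phi_{ji}(t)$, hence $|\zeta_{ji}(t)|\le r$ by invariance of $\mathscr A^r$. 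Because $L(t)$ is symmetric and $\cos\zeta_{ji}=\cos\zeta_{ij}$, the matrix $B(t)$ is symmetric with row sums zero and the dynamics restricts to $\mathbf 1^\perp$; the goal reduces to showing $\|\delta_\perp(t)\|\to 0$, where $\delta_\perp=\delta-(\mathbf 1^\top\delta/m)\mathbf 1$.

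Two inputs would then drive the argument. The first is a pointwise Loewner bound $B(t)\succeq\tilde L^r(t)$, verified edge-by-edge: if $a_{ij}(t)\ge 0$ then $a_{ij}\cos\zeta_{ji}\ge a_{ij}\cos r$, and if $a_{ij}(t)<0$ then $a_{ij}\cos\zeta_{ji}\ge a_{ij}$; summing against $(x_i-x_j)^2$ weights gives $x^\top B(t)x\ge x^\top\tilde L^r(t)x$ for all $x$. The second uses the PSD hypothesis on $L(\tau)$: the average $\bar L(kh,(k+1)h)$ is symmetric, PSD and row-sum-zero, so Lemma \ref{lem1} applied to $\bar L(kh,(k+1)h)$ supplies $\lambda_2(\bar L(kh,(k+1)h))\ge\lambda_2(\tilde{\bar L}^r(kh,(k+1)h))=\alpha_k(h)$ on $\mathbf 1^\perp$.

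With Lyapunov function $V(t)=\|\delta_\perp(t)\|^2$ I would then run the block discretisation of the proof of Lemma \ref{thmX}, mindful that $B(t)$ is not pointwise PSD. Uniform boundedness $\|B(t)\|\le R$ yields $V(\tau)\le e^{2Rh}V(kh)$ and $\|\delta_\perp(\tau)-\delta_\perp(kh)\|=O(h)\|\delta_\perp(kh)\|$ on $[kh,(k+1)h]$, so integrating $\dot V=-2\delta_\perp^\top B\delta_\perp$ and freezing $\delta_\perp$ at $kh$ produces $V((k+1)h)=V(kh)-2h\,\delta_\perp(kh)^\top\bar B(kh,(k+1)h)\delta_\perp(kh)+O(h^2)V(kh)$. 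Once the block contraction $V((k+1)h)\le(1-2h\alpha_k(h)+O(h^2))V(kh)$ is in hand, telescoping over $k$ and using $\sum_k\alpha_k(h)=+\infty$ force $V(Nh)\to 0$, equivalently $\delta_i-\delta_j\to 0$ and hence $|\phi_{ij}(t)-\theta_{ij}(t)|\to 0$ for all $i,j$.

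The main obstacle is this last block inequality, which requires reconciling the two inputs of the second paragraph. The pointwise $B\succeq\tilde L^r$ integrates only to $\bar B\succeq\overline{\tilde L^r}$, whereas the hypothesis is cast in terms of $\tilde{\bar L}^r$, and although $\tilde{\bar L}^r\succeq\overline{\tilde L^r}$ always holds, this does not chain with $\bar B\succeq\overline{\tilde L^r}$ to yield $\bar B\succeq\tilde{\bar L}^r$. The resolution must route the bound through the PSD matrix $\bar L$ itself, on which Lemma \ref{lem1} grants $\lambda_2\ge\alpha_k(h)$, and absorb the edgewise deviations $\cos\zeta_{ji}(\tau)-\cos r$ on attractive edges and $1-\cos\zeta_{ji}(\tau)$ on repulsive edges into the $O(h^2)$ remainder of the frozen quadratic form, paralleling the PSD-averaging step inside the proof of Lemma \ref{thmX}.
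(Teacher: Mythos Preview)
Your reduction to $\dot\delta=-B(t)\delta$ matches the paper, but from there the approaches diverge sharply. The paper does not freeze $\delta_\perp$ or run a perturbative $O(h^2)$ expansion; instead it asserts that $B(t)$ is itself symmetric positive semidefinite (inherited from $L(t)$) and then invokes Lemma~\ref{thmX} directly with $G=B$. The only remaining work is to lower-bound $\varrho_k=\lambda_2\bigl(\bar B((k+1)h,kh)\bigr)$ by $\alpha_k$ via Lemma~\ref{lem1}, after which $\sum_k\varrho_k\ge\sum_k\alpha_k=+\infty$ yields consensus of $\delta$ and hence the conclusion. You explicitly decline this route (``mindful that $B(t)$ is not pointwise PSD''), which forces you onto the harder path.

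That harder path has a genuine gap. Your final paragraph proposes to ``absorb the edgewise deviations $\cos\zeta_{ji}(\tau)-\cos r$ \dots into the $O(h^2)$ remainder'', but those deviations are $O(1)$ quantities, uniformly in $h$: they do not shrink as $h\to 0$. Consequently the discrepancy between $\delta_\perp(kh)^\top\bar B\,\delta_\perp(kh)$ and $\delta_\perp(kh)^\top\bar L\,\delta_\perp(kh)$ is of the same order as the leading term $2h\alpha_k V(kh)$, not a higher-order correction, and it can carry either sign on repulsive edges. Your block inequality $V((k+1)h)\le(1-2h\alpha_k+O(h^2))V(kh)$ therefore does not follow from the freezing argument. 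What is actually needed is a direct quadratic-form comparison giving $\lambda_2(\bar B)\ge\alpha_k$; that is exactly what the paper supplies by combining its PSD assertion on $B(t)$ with Lemma~\ref{lem1}, rather than by any perturbative device.
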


\begin{proof} Consider the equations (\ref{linear}). Let $B(t)=[b_{ij}(t)]$, with $b_{ij}(t)=-a_{ij}(t)\cos(\zeta_{ji}(t))$ if $i\ne j$ and $b_{ii}(t)=-\sum_{j\ne i}b_{ij}(t)$. Let $\varrho_{k}$ be the smallest eigenvalue of $\left(\frac{1}{h}\int_{kh}^{(k+1)h}B(s)ds\right)$ over the eigenspace orthogonal to $\mathbf 1$.
Since $L(t)$ is symmetric and positive semidefinite, so is $B(t)$. By Lemma \ref{lem1},
\begin{eqnarray*}
\varrho_{k}=\lambda_{2}\left(\frac{1}{h}\int_{kh}^{(k+1)h}B(s)ds\right)\ge \lambda_{2}
\left(\tilde{\bar{B}}^{r}((k+1)h,kh)\right)=\alpha_{k}\ge 0.
\end{eqnarray*}
Then, $\sum_{k=0}^{\infty}\varrho_{k}\ge\sum_{k=0}^{\infty}\alpha_{k}=+\infty,$ and Theorem \ref{thm3} follows by Lemma \ref{thmX}.
\end{proof}

Moreover, we have the following result on exponential asymptotic stability.
\begin{corollary}
\label{cor2}
Under the hypotheses and notations in Theorem \ref{thm3},
if there exist $h>0$ and $\hat{\alpha}>0$ such that
\begin{eqnarray}
\alpha_{k}=\lambda_{2}(\tilde{\bar{L}}^{r}(kh,(k+1)h))>\hat{\alpha},\label{alpha2}
\end{eqnarray}
then the PD trajectories of the coupled system \eqref{TVKR} are exponentially asymptotically stable within $\mathscr A^{r}$.
\end{corollary}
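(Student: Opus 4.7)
The plan is to essentially re-run the proof of Theorem \ref{thm3} but invoke the ``in addition'' clause of Lemma \ref{thmX} to upgrade the convergence from asymptotic to exponential. Since the hypothesis (\ref{alpha2}) differs from that of Theorem \ref{thm3} only in replacing ``$\sum\alpha_k=\infty$'' by a uniform positive lower bound on $\alpha_k$, and Lemma \ref{thmX} already records exactly this strengthening at the level of the auxiliary linear consensus system, the corollary should reduce to an accounting exercise rather than a genuinely new argument.

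Concretely, I would first recall the linearisation (\ref{linear}) derived in the proof of Theorem \ref{thm1}: for any two solutions $\theta,\phi$ of (\ref{TVKR}), the differences $\delta_i=\phi_i-\theta_i$ satisfy $\dot\delta=-B(t)\delta$, where $B(t)=[b_{ij}(t)]$ with $b_{ij}(t)=-a_{ij}(t)\cos(\zeta_{ji}(t))$ for $i\ne j$, row sums zero, and $\zeta_{ij}(t)\in\mathscr A^r$. Since $L(t)$ is symmetric positive semidefinite and $\cos(\zeta_{ji}(t))=\cos(\zeta_{ij}(t))\ge\cos(r)>0$ on the invariant region $\mathscr A^r$, the matrix $B(t)$ is symmetric and positive semidefinite as well, with uniformly bounded norm (inherited from $L(t)$ and $|\cos\zeta_{ij}|\le 1$). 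Next, applying Lemma \ref{lem1} exactly as in the proof of Theorem \ref{thm3} gives
\begin{equation*}
\varrho_k := \lambda_2\!\left(\tfrac{1}{h}\int_{kh}^{(k+1)h}B(s)\,ds\right) \ge \lambda_2\!\left(\widetilde{\bar L}^{r}(kh,(k+1)h)\right) = \alpha_k > \hat\alpha,
\end{equation*}
so the sequence $\{\varrho_k\}$ plays the role of $\{\beta_k\}$ in Lemma \ref{thmX} and is uniformly bounded away from zero. The exponential part of Lemma \ref{thmX} then yields constants $M>0$ and $\epsilon>0$ (depending on $\hat\alpha$, $h$, $m$, and the uniform bound $R$ on $\|B(t)\|$) such that $|\delta_i(t)-\delta_j(t)|\le M\max_{k,\ell}|\delta_k(0)-\delta_\ell(0)|\exp(-\epsilon t)$ for all $i,j$ and all sufficiently large $t$.

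Finally, the identity $\phi_{ij}(t)-\theta_{ij}(t)=\delta_i(t)-\delta_j(t)$ noted in the proof of Theorem \ref{thm1} translates this consensus-rate bound directly into the exponential decay estimate in the definition of exponential asymptotic stability within $\mathscr A^r$. The main obstacle I foresee is not conceptual but bookkeeping: one must verify that the hypothesis of Lemma \ref{thmX} concerning the uniform bound $\|G(t)\|\le R$ genuinely holds for our $B(t)$ and that the resulting rate $\epsilon$ is independent of the particular pair $(\theta,\phi)$ of solutions, so that the constants $M,\epsilon$ in the definition are uniform over initial data in $\mathscr A^r$. Since $\|B(t)\|\le\|L(t)\|$ and the $\zeta_{ij}(t)$ always lie in $[-r,r]$ whenever both solutions start in the invariant set $\mathscr A^r$, this uniformity is transparent once we assume $L(t)$ has uniformly bounded norm (an implicit standing hypothesis throughout the section), and the corollary follows.
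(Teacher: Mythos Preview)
Your proposal is correct and follows exactly the route the paper intends: the corollary is stated without proof precisely because it is the exponential clause of Lemma~\ref{thmX} plugged into the argument of Theorem~\ref{thm3}, and you have traced that out faithfully, including the uniformity check on the rate constants.
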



\section{Asymptotic dynamics of phase differences}
In this section, we investigate time-varying patterns of phase differences in (\ref{TVKR}) in three common scenarios.

\subsection{Asymptotic periodicity}

\begin{definition}
A vector-valued function $x(t)\in\R^{n}$ is said to be \emph{asymptotically periodic (AP) with period $T$} if there exists a $T$-periodic function $x^{*}(t)$ such that $\lim_{t\to\infty}\|x(t)-x^{*}(t)\|=0$. In addition, if the convergence is exponential, then $x(t)$ is said to be \emph{exponentially asymptotically periodic} (EAP).
\end{definition}

Consider the following hypothesis:

${\bf H}_1$: $\omega_{i}(t)$ and $a_{ij}(t)$ are piecewise continuous and periodic with a fixed period $T$ for all $i,j=\onetom$.

Then we have the following result.

\begin{proposition}\label{prop1}
Assume the hypotheses  ${\bf H}_1$, let $r\in[0,\pi/2)$, and suppose that $\mathscr{A}^{r}$ is invariant for \eqref{TVKR}.
Then the PD trajectories of \eqref{TVKR} starting from initial values in $\mathscr A^{r}$ are exponentially asymptotically periodic provided any one of the following conditions holds:
\begin{enumerate}
\item $a_{ij}(t)\ge 0$ for all $i\ne j$ and $t\ge 0$, and there exist $T>0$ and $\eta>0$ such that the $\eta$-graph corresponding to the Laplacian matrix $[\int_{0}^{T}-a_{ij}(s)ds]_{i,j=1}^{m}$ with $a_{ii}(t)=-\sum_{j\ne i}a_{ij}(t)$ has a spanning tree;
\item $a_{ij}(t)\in\mathbb R$ for all $i,j$ and $t\ge 0$,  there exists $\eta>0$ such that $(1/T)\int_{0}^{T}\xi(L(s),r)ds\le- \eta$, where $\xi(L(t),r)$ is as defined in \eqref{xi};
\item $a_{ij}(t)\in\mathbb R$ for all $i,j$ and $t\ge 0$, $L(t)$ is symmetric and positive semidefinite for all $t\in[0,T]$, and the inequality \eqref{alpha2} holds.
\end{enumerate}

\end{proposition}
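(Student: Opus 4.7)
The plan is to invoke the appropriate exponential stability result (Corollary \ref{cor1}, Theorem \ref{thm2}, or Corollary \ref{cor2}) in each of the three cases to conclude that PD trajectories starting in $\mathscr{A}^r$ contract exponentially toward each other, and then to construct a $T$-periodic PD trajectory as a fixed point of the time-$T$ Poincaré map.

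First I would observe that under hypothesis ${\bf H}_1$ the phase-difference equations form a closed $T$-periodic non-autonomous ODE system on $\mathscr{A}^r$ (the right-hand side of $\dot\theta_{ij}$ depends only on phase differences, since $\theta_k-\theta_i=-\theta_{ik}$). The invariance of the compact set $\mathscr{A}^r$ then yields a well-defined continuous Poincaré map $P:\mathscr{A}^r\to\mathscr{A}^r$, $\Theta(0)\mapsto\Theta(T)$. Next I would verify that each of the three conditions implies the hypothesis of the corresponding earlier result: by $T$-periodicity, $\int_t^{t+T}a_{ij}(s)\,ds$ is independent of $t$, so condition (1) matches Corollary \ref{cor1}; $\xi(L(t),r)$ is $T$-periodic, so $(1/T)\int_t^{t+T}\xi(L(s),r)\,ds$ is independent of $t$ and condition (2) matches Theorem \ref{thm2}; condition (3) matches Corollary \ref{cor2} directly. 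In all three cases we obtain exponential asymptotic stability of PD trajectories within $\mathscr{A}^r$, i.e., for any $\Theta_1,\Theta_2\in\mathscr{A}^r$,
\begin{equation*}
\|P^n(\Theta_1)-P^n(\Theta_2)\|\le M\,e^{-\epsilon nT}\,\|\Theta_1-\Theta_2\|
\end{equation*}
for all sufficiently large $n$.

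Choosing $n$ so large that $Me^{-\epsilon nT}<1$, the iterate $P^n$ is a strict contraction on the complete metric space $\mathscr{A}^r$. Banach's fixed-point theorem yields a unique fixed point $\Theta^*$ of $P^n$; since $P^n(P(\Theta^*))=P(P^n(\Theta^*))=P(\Theta^*)$, uniqueness forces $P(\Theta^*)=\Theta^*$. The trajectory $\Theta^*(t)$ starting at $\Theta^*$ is then $T$-periodic: by uniqueness of solutions and the $T$-periodicity of the vector field, $\Theta^*(t+T)$ satisfies the same ODE with $\Theta^*(T)=\Theta^*$, hence coincides with $\Theta^*(t)$. Finally, for any trajectory $\Theta(t)$ starting in $\mathscr{A}^r$, exponential asymptotic stability applied to the pair $(\Theta,\Theta^*)$ gives $\|\Theta(t)-\Theta^*(t)\|\le M\,e^{-\epsilon t}\,\max_{i>j}|\Theta_{ij}(0)-\Theta^*_{ij}|$, which is exactly exponential asymptotic periodicity.

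The only subtlety is the translation of the three hypotheses into the exact form required by the earlier stability statements; once that is handled, the Poincaré-map argument is standard. The argument uses no more than continuous dependence on initial data plus the exponential contraction already furnished by Corollary \ref{cor1}, Theorem \ref{thm2}, or Corollary \ref{cor2}, so no new analytic machinery is needed. One could alternatively bypass the fixed-point step by noting that the sequence $\{P^n(\Theta_0)\}$ is Cauchy for any $\Theta_0\in\mathscr{A}^r$ and setting $\Theta^*:=\lim_n P^n(\Theta_0)$, but the contraction version is cleaner.
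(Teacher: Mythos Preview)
Your proposal is correct and follows essentially the same route as the paper: first use $T$-periodicity to extend each of the three hypotheses to all $t\ge 0$ and invoke Corollary~\ref{cor1}, Theorem~\ref{thm2}, or Corollary~\ref{cor2} for exponential stability of PD trajectories, then apply a contraction/fixed-point argument to the time-$T$ Poincar\'e map on the compact invariant set $\mathscr{A}^r$ to produce the unique $T$-periodic PD trajectory. The paper makes the well-definedness of the Poincar\'e map on PD space explicit by showing that two initial phase vectors with identical PDs yield identical PD trajectories (via a constant phase shift), which is exactly your observation that the right-hand side of $\dot\theta_{ij}$ depends only on phase differences.
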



\begin{proof}
On the basis of Hypothesis ${\bf H}_{1}$, condition 1 implies that the $\eta$-graph corresponding to the Laplacian matrix $Z(t)$ defined in \eqref{Z}
has a spanning tree for all $t\ge 0$.
Condition 2 implies that $(1/T)\int_{t}^{t+T}\xi(L(s),r)ds\le- \eta$ for all $t\ge 0$. Finally, condition 3 implies that $L(t)$ is symmetric and positive semidefinite for all $t\ge 0$.
Thus, under any one of these conditions, Corollary \ref{cor1}, Theorem \ref{thm2}, and Corollary \ref{cor2} guarantee that the PD trajectories are  exponentially stable.

Let $\Theta(t)=[\theta_{ij}(t)]_{i>j}$ and $\Phi(t)=[\phi_{ij}]_{i>j}$ be the phase differences of the solutions of (\ref{TVKR})
with initial values such that $\Theta(0)=[\theta_{ij}(0)]_{i>j}$ and $\Phi(0)=[\phi_{ij}(0)]_{i>j}$. We have
\begin{equation}
\|\Theta(t)-\Phi(t)\|\le M\|\Theta(0)-\Phi(0)\|\exp(-\epsilon t)\label{as1}
\end{equation}
for some $M$ and $\epsilon>0$.
Consider the mapping
\begin{equation*}
H : \Omega \to\Omega, \quad
\Theta(0) \mapsto\Theta(T),
\end{equation*}
where $\Omega $ is the compact hypercube in $\R^{m(m-1)/2}$
given by
\begin{equation*}
\Omega=\left\{[\theta_{ij}]_{i>j}: |\theta_{ij}|\le r,
 \; i,j=\onetom\right\}.
\end{equation*}
%
We will show that the mapping $H$ is well defined. Let two initial values $\theta(0)$ and $\vartheta(0)$ belonging to $\mathscr{A}^{r}$ be given such that $\theta_{i}(0)-\theta_{j}(0)=\vartheta_{i}(0)-\vartheta_{j}(0)=\theta_{ij}(0)$ for all $i>j$, which implies $\theta_{ij}(0)\in[-r,r]$, and there exists a unique $\theta_{0}$ such that $\vartheta_{i}(0)=\theta_{i}(0)+\theta_{0}$ for all $i=\onetom$. Let the solution of (\ref{TVKR}) with these initial values be denoted by $\theta_{i}(t)$ and $\vartheta_{i}(t)$, respectively, which are still contained in $\mathscr{A}^{r}$.

Let $\psi_{i}(t)=\theta_{i}(t)+\theta_{0}$ for all $i=\onetom$. Noting that
\begin{align*}
\dot{\psi}_{i}=\dot{\theta}_{i}&=\omega_{i}(t)+\sum_{j=1}^{m}a_{ij}(t)\sin\left[(\theta_{j}
+\theta_{0})
-(\theta_{i}+\theta_{0})\right]\\
&=\omega_{i}(t)+\sum_{j=1}^{m}a_{ij}(t)\sin\left[\psi_{j}-\psi_{i}\right],\quad i=\onetom,
\end{align*}
one can see that $\{\psi_{i}(t):i=\onetom\}$ are solutions of (\ref{TVKR}) with initial values $\psi_{i}(0)=\vartheta_{i}(0)$. By the uniqueness of the solution, $\vartheta_{i}(t)=\psi_{i}(t)=\theta_{i}(t)+\theta_{0}$ for all $t\ge 0$ and $i$. Hence, $\vartheta_{ij}(t)=\theta_{ij}(t)$ for all $i,j$. Therefore, given the initial values of PD, $\Theta(0)\in\Omega$, the PD $\Theta(t)\in\Omega$ of the solutions of (\ref{TVKR}) exists and is unique; that is, the mapping $H$ is well-defined.

Thus, there exists an integer $K$ such that $M\exp(-\epsilon TK)<1$. Then
\begin{equation*}
\|H^{(K)}\circ\Theta(0)-H^{(k)}\circ\Phi(0)\|\le M\exp(-\epsilon TK) \, \|\Theta(0)-\Phi(0)\|,
\end{equation*}
which implies that $H^{(K)}$ is a contraction map. Hence, there exists a unique fixed point $\Theta^{*}=[\theta^{*}_{ij}]_{i>j}$ of  $H^{(K)}$, namely, $H^{(K)}(\Theta^{*})=\Theta^{*}$. Note that $H(\Theta^{*})$ is still a fixed point of $H^{(K)}$. By the uniqueness of the fixed point of the contraction map, $H(\Theta^{*})=\Theta^{*}$. Hence, $\theta_{ij}^{*}(0)=\theta_{ij}^{*}(T)$. Consider the solution $\theta^{*}_{i}(t)$ of (\ref{TVKR}) with $\theta_{i}^{*}(0)-\theta_{j}^{*}(0)=\theta^{*}_{ij}(0)$. Under the hypotheses ${\bf H}_1$, we have $\theta_{ij}^{*}(t+T)=\theta_{ij}^{*}(t)$ for all $t\ge 0$ and $i,j=\onetom$. That is, $\theta_{ij}^{*}(t)=\theta_{i}^{*}(t)-\theta^{*}_{j}(t)$ is periodic with period $T$.
Combined with the conditions of Corollary \ref{cor1}, and Theorems \ref{thm2} and \ref{thm3}, this periodic PD trajectory is exponentially asymptotically stable, which completes the proof.
\end{proof}

As a numerical example, we consider a network of five Kuramoto oscillators with periodical switching between two coupling matrices $A$ and two intrinsic frequencies $\omega$ as follows:
\begin{eqnarray*}
\omega(t)=\begin{cases}\omega^{1}, & t\in[(2k-1)T,2kT)\\
\omega^{2}, & t\in[2kT,(2k+1)T)\end{cases} \quad L(t)=
\begin{cases}L^{1}, & t\in[(2k-1)T,2kT)\\
L^{2}, & t\in[2kT,(2k+1)T)\end{cases}
\end{eqnarray*}
with $k\in \mathbb{N}$, where $T=2$ (sec) and
\begin{eqnarray*}
&&\omega^{1}=[ 0.1294, 1.9765,  1.8790,   0.7331,    1.1332]^{\top},\\
&&\omega^{2}=
[1.9578,    0.5295,    1.1234,    1.3591,    2.1786]^{\top},\\
&&L^{1}=\left[\begin{array}{rrrrr}
-4.5343  &  0.5795   & 1.7331  &  0.9795  &  1.2422\\
    0.2241 &  -1.9971  &  0.4334 &   0.2703  &  1.0692\\
    1.6323 &  -0.0286  & -3.5243 &   1.2298  &  0.6908\\
    0.1402 &   0.7296  &  0.6795 &  -2.4363  &  0.8870\\
    0.5957 &   0.4723  &  0.4909 &  -0.0119 &  -1.5470
\end{array}\right],\\
&&L^{2}=\left[\begin{array}{rrrrr}
-4.6960  &  0.3018  &  1.7915  &  1.6922  &  0.9105\\
    0.1732 &  -2.3331  &  1.6492  &  0.3674  &  0.1433\\
    1.0687 &   0.4723  & -3.1947  &  0.5293   & 1.1245\\
    0.7140  &  1.1625  &  0.0833  & -3.1210  &  1.1612\\
    1.3104  &  0.1241  &  1.3107  &  1.1887 &  -3.9339\\
\end{array}\right].
\end{eqnarray*}
(The components of $L^{1,2}$ and $\omega^{1,2}$ are randomly generated until the specific criteria of Proposition \ref{prop1} are met.)
Taking $r=\pi/3$, we calculate  $\xi(L^{1},r)= 0.0858$ and $\xi(L^{2},r)=-0.1249$. Thus,
$\xi(L^{1},r)+\xi(L^{2},r)=-0.0391 < 0$; hence, the conditions of Theorem \ref{thm2} and Proposition \ref{prop1} hold.

As shown in Fig.~\ref{Fig1_AP}, the phase differences are asymptotically stable and converge to periodic trajectories. In addition, $\mathscr{A}^{r}$ (with $r=\pi/3$) is indeed found to be invariant for (\ref{TVKR}).


\begin{figure}[htp]
\centering
\includegraphics[width=.9\textwidth]
{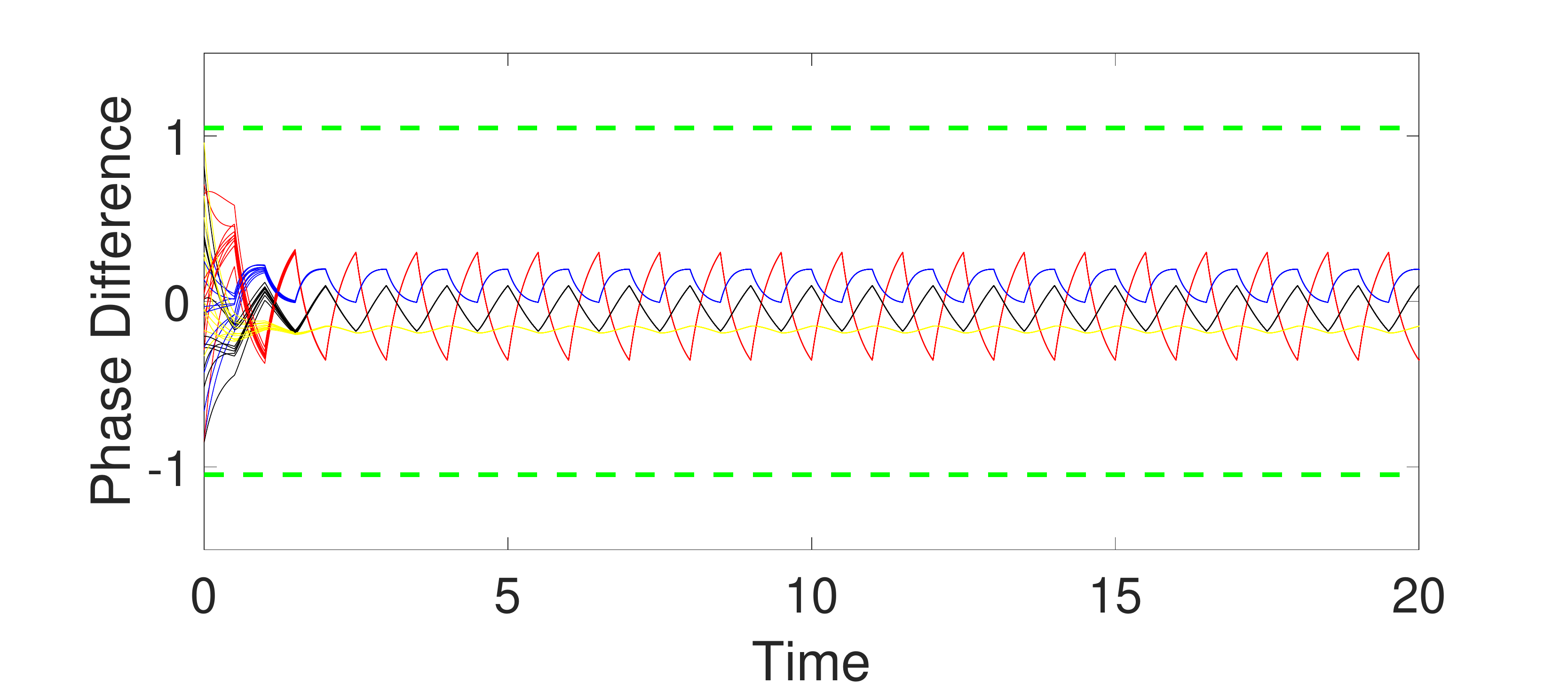}
\caption{Dynamics of the phase differences: $\theta_{1}(t)-\theta_{2}(t)$ (red), $\theta_{2}(t)-\theta_{3}(t)$ (black), $\theta_{3}(t)-\theta_{4}(t)$ (blue), and $\theta_{4}(t)-\theta_{5}(t)$ (yellow) of ten simulations with randomly chosen initial values from $[-\pi/6,\pi/6]$ following a uniform distribution. The two horizontal green dashed lines mark the values $\pm\pi/3$ corresponding to $\pm r$.}
\label{Fig1_AP}
\end{figure}



\subsection{Small perturbations}

For a small parameter $\epsilon$, consider the following hypothesis:

 ${\bf H}_{2}$: The frequencies and coupling strengths have the form
\begin{equation}
\omega_{i}(t)=\bar{\omega}_{i}+\epsilon\Omega_{i}(t),\qquad a_{ij}(t)=\bar{a}_{ij}+\epsilon A_{ij}(t),
\label{sp}
\end{equation}
where $\Omega_{i}(t)$ and $A_{ij}(t)$ are piecewise continuous, bounded, and periodic functions with period $T$ such that
\begin{equation}
\int_{0}^{T}\Omega_{i}(t)dt=0,\quad \int_{0}^{T}A_{ij}(t)dt=0.\label{periodic}
\end{equation}

Let $\bar{\theta}_{ij}$, with $|\bar{\theta}_{ij}|\in[0,\pi/2]$ for all $i,j$, be constant PDs of the the phase-locked solution of the following system with static parameters:
\begin{eqnarray}
\dot{\bar{\theta}}_{i}=\bar{\omega}_{i}+\sum_{j=1}^{m}\bar{a}_{ij}\sin(\bar{\theta}_{j}
-\bar{\theta}_{i}),\quad i=\onetom.
\label{SKR}
\end{eqnarray}
Namely, there exist $\Omega>0$ and $\bar{\vartheta}_{ij}\in[0,2\pi)$ with $\bar{\theta}_{ij}=\bar{\vartheta}_{i}-\bar{\vartheta}_{j}$,
such that
\begin{equation}
\bar{\theta}_{i}(t)=\Omega t+\bar{\vartheta}_{i}, \quad i=\onetom.\label{ps}
\end{equation}
For $\epsilon\to 0$, we consider a perturbation solution of (\ref{TVKR}) in the form
\begin{equation}
\theta_{i}(t)=\bar{\theta}_{i}(t)+\epsilon\Phi_{i}(t)+o(\epsilon). \label{represent}
\end{equation}
Differentiating both sides of (\ref{represent}) and comparing terms of first order in $\epsilon$ gives
\begin{equation}
\dot{\Phi}_{i}=\Omega_{i}(t)+\sum_{j=1}^{m}A_{ij}(t)\sin(\bar{\theta}_{ji})+\sum_{j=1}^{m}
\bar{a}_{ij}\cos(\bar{\theta}_{ji})[\Phi_{j}-\Phi_{i}].\label{variation}
\end{equation}

\begin{proposition}  \label{propx}
Let $r\in[0,\pi/2)$
and suppose that $\mathscr{A}^{r}$ is invariant in (\ref{TVKR}), the hypotheses ${\bf H}_{2}$ hold, and \eqref{SKR} possesses a phase-locked solution $[\bar{\theta}_{1}(t),\dots,\bar{\theta}_{m}(t)]^{\top}\in\mathscr{A}^{r}$ as described by \eqref{ps}.
Suppose further that any one of the following conditions holds:
\begin{enumerate}
\item $a_{ij}(t)\ge 0$ for all $i\ne j$ and $t\ge 0$, and
the graph corresponding to the Laplacian $\bar{L}=[\bar{L}_{ij}]$ with
\begin{equation*}
\bar{L}_{ij}=-\bar{a}_{ij}, \; i\ne j; \qquad
\bar{L}_{ii} = -\sum_{j=1}^{m}\bar{l}_{ij},
\end{equation*}
has a spanning tree;
\item $\xi(\bar{L},r)<0$;
\item $L(t)$ is symmetric and positive semidefinite for all $t\ge 0$, and $\lambda_{2}(\bar{L})>0$.
\end{enumerate}
Then there exist $U>0$ and $\Phi_{i}(t)$ satisfying $|\Phi_{i}(t)|<U$ for all $i$ and $t$, such that \eqref{TVKR} has a solution in the form of $\theta_{i}(t)=\bar{\theta}_{i}(t)+\epsilon\Phi_{i}(t)+o(\epsilon)$ as $\epsilon\to 0$. Furthermore, if $\epsilon$ is sufficiently small, the PD trajectories $\theta_{ij}(t)=\theta_{i}(t)-\theta_{j}(t)$ are asymptotically stable with $\mathcal A^{r}$.
\end{proposition}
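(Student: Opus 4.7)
The plan is to split the argument into two steps: first, construct a bounded $\Phi_i(t)$ that makes the ansatz (\ref{represent}) a valid first-order solution of (\ref{TVKR}); and second, apply the appropriate result of Section~2 (Corollary~\ref{cor1}, Theorem~\ref{thm2}, or Corollary~\ref{cor2}) to the full time-varying system to conclude asymptotic stability of PD trajectories, leveraging the robustness of the three hypotheses under small perturbations of the constant parameters.

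The crucial simplification for the first step is that along the phase-locked orbit (\ref{ps}) the angles $\bar{\theta}_{ji}=\bar{\vartheta}_j-\bar{\vartheta}_i$ are time-independent, so (\ref{variation}) becomes the LTI system
\begin{equation*}
\dot{\Phi}=-M\Phi+f(t),\qquad M_{ij}=-\bar{a}_{ij}\cos(\bar{\theta}_{ji})\;(i\ne j),\quad M_{ii}=-\sum_{k\ne i}M_{ik},
\end{equation*}
driven by the $T$-periodic forcing $f_i(t)=\Omega_i(t)+\sum_j A_{ij}(t)\sin(\bar{\theta}_{ji})$. Since $M\mathbf 1=0$, I would verify under each of the three conditions that $-M$ is Hurwitz on $\mathbf 1^{\perp}$: in Case~1, $\cos(\bar{\theta}_{ji})\ge\cos r>0$ so $M$ is a Laplacian with the same link set as $\bar L$ and inherits its spanning tree; in Case~2, the Lyapunov argument from the proof of Theorem~\ref{thm2} applied to $\dot x=-Mx$ with constant $\zeta_{ji}=\bar{\theta}_{ji}$ yields $\dot V(x)\le\xi(\bar L,r)V(x)<0$; in Case~3, $M$ is symmetric and $\lambda_2(M)\ge\lambda_2(\tilde{\bar L}^r)>0$ by Lemma~\ref{lem1}. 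With $-M$ thus stable on $\mathbf 1^{\perp}$, I would decompose $\Phi=\psi\mathbf 1+\Phi^{\perp}$: the component $\Phi^{\perp}$ is the unique bounded $T$-periodic response of a stable LTI system to periodic input, while $\dot\psi=(1/m)\mathbf 1^{\top}f(t)$ has zero mean by (\ref{periodic}), so $\psi(t)$ is $T$-periodic up to an arbitrary initial constant. This produces the bounded $\Phi_i(t)$ claimed in the statement.

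For the second step, hypothesis (\ref{periodic}) together with $T$-periodicity of $A_{ij}$ and $\Omega_i$ gives $\int_t^{t+T}A_{ij}(s)\,ds=0$ and $\int_t^{t+T}\Omega_i(s)\,ds=0$ for \emph{every} $t\ge 0$, so the aggregated coupling matrix over any length-$T$ window equals $\bar L$ exactly. Thus in Case~1 the spanning tree of $\bar L$ is precisely the spanning tree of the aggregated Laplacian $Z(t)$ in (\ref{Z}), and Corollary~\ref{cor1} applies; in Case~3, $\lambda_2(\tilde{\bar L}^r(t,t+T))=\lambda_2(\tilde{\bar L}^r)>0$ uniformly in $t$, so Corollary~\ref{cor2} applies; in Case~2, continuity of $\xi(\cdot,r)$ in the matrix argument together with the boundedness of $A_{ij}$ yields $|\xi(L(s),r)-\xi(\bar L,r)|=O(\epsilon)$ and therefore $(1/T)\int_t^{t+T}\xi(L(s),r)\,ds\le\xi(\bar L,r)+O(\epsilon)<0$ for small $\epsilon$, so Theorem~\ref{thm2} applies. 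The main obstacle I anticipate is verifying that $\mathscr A^r$ remains invariant under the full perturbed dynamics, which is a standing hypothesis of each Section~2 theorem; I would handle this by observing that Lemma~\ref{lem0}'s strict inequality (\ref{invariant}) already holds for the unperturbed phase-locked configuration (which is assumed to lie in $\mathscr A^r$) and therefore persists under $O(\epsilon)$ changes in $\omega_i$ and $a_{ij}$, provided $\epsilon$ is chosen small enough; the same smallness margin also absorbs the $o(\epsilon)$ remainder of the ansatz, so that the perturbed trajectory stays strictly inside $\mathscr A^r$ and the quantitative stability hypothesis of the invoked theorem is preserved.
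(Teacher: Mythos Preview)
Your overall strategy matches the paper's: write the variational equation (\ref{variation}) as an LTI system $\dot\Phi=-M\Phi+f(t)$ along the phase-locked orbit, establish that $-M$ has a simple zero eigenvalue with all other eigenvalues in the open left half-plane (the paper isolates this as Lemma~\ref{lem3}), use this spectral structure to bound $\Phi$, and then invoke the appropriate Section~2 result for asymptotic stability of the full system.

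There is one genuine gap in your boundedness argument. The orthogonal splitting $\Phi=\psi\mathbf 1+\Phi^{\perp}$ with $\psi=(1/m)\mathbf 1^{\top}\Phi$ decouples the dynamics only when $\mathbf 1^{\top}M=0$, i.e.\ when $M$ has zero column sums. This holds in Case~3 (symmetric $M$) but \emph{not} in Cases~1 and~2, where $M$ inherits only $M\mathbf 1=0$ from the row-sum structure. In those cases $\mathbf 1^{\perp}$ is not $M$-invariant, so your equation $\dot\psi=(1/m)\mathbf 1^{\top}f(t)$ acquires an extra coupling term $-(1/m)\mathbf 1^{\top}M\Phi$, and the $\Phi^{\perp}$-dynamics do not close up as a stable LTI system on $\mathbf 1^{\perp}$. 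The paper handles this by passing to the Jordan form $Y=QJQ^{-1}$ and treating the one-dimensional block $J_1=0$ separately from the stable blocks; equivalently, you should project along the \emph{left} zero-eigenvector of $M$ rather than along $\mathbf 1$. This is an easy repair, but as written the argument is incomplete for Cases~1 and~2.

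Two minor remarks. First, your second step contains a simplification the paper does not exploit: since $A_{ij}$ is $T$-periodic with zero mean, the aggregated Laplacian $Z(t)$ in (\ref{Z}) equals $T\bar L$ \emph{exactly} for every $t$, so in Cases~1 and~3 the hypotheses of Corollaries~\ref{cor1} and~\ref{cor2} hold independently of $\epsilon$; the paper instead argues perturbatively that small $\epsilon$ preserves the spanning tree or spectral bound. Second, invariance of $\mathscr A^{r}$ for (\ref{TVKR}) is an explicit hypothesis of the proposition, so your closing paragraph on Lemma~\ref{lem0} is not needed.
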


\begin{proof}
We first show that the $\Phi_{i}(t)$ are bounded. Let $Y=[y_{ij}]$, where $y_{ij}=\bar{a}_{ij}\cos(\bar{\theta}_{ji})$ for $i\ne j$ and $y_{ii}=-\sum_{j=1}^{m}y_{ij}$, and
\begin{eqnarray*}
z_{i}(t)=\Omega_{i}(t)+\sum_{j=1}^{m}A_{ij}(t)\sin(\bar{\theta}_{ji}),\quad i=\onetom.
\end{eqnarray*}
Then we can rewrite (\ref{variation}) in the compact form
\begin{eqnarray}
\dot{\Phi}=z(t)+Y\Phi(t),\label{variation1}
\end{eqnarray}
where $z(t)=[z_{1}(t),\dots,z_{m}(t)]^{\top}$ and $\Phi(t)=[\Phi_{1}(t),\dots,\Phi_{m}(t)]^{\top}$.
The solution of (\ref{variation1}) is
\begin{eqnarray}
\Phi(t)=\exp(Yt)\Phi(0)+\int_{0}^{t}\exp(Y(t-s))z(s)ds.\label{sol1}
\end{eqnarray}
We shall prove that $\|\Phi(t)\|$ is bounded by some constant for all $t\ge 0$. To this end, we require the following lemma.

\begin{lemma}\label{lem3}
Any one of conditions 1, 2 and 3 of Theorem~\ref{propx} implies that $Y$ has a simple zero eigenvalue and all other eigenvalues have negative real parts.
\end{lemma}


See Appendix D for a proof. This lemma implies that the first term $\|\exp(Yt)\Phi(0)\|$ is bounded for $t\ge 0$.
%
%
We write $Y=QJQ^{-1}$ in the Jordan canonical form
$J=\mathrm{diag}[J_{1},\dots,J_{K}]$, where $J_{k}\in\R^{n_{k}}$ is the $k$-th Jordan block corresponding to the eigenvalue $\lambda_{k}$ of $Y$,
which may contain complex elements. The arguments below apply for the complex space $\mathbb C^{m}$ with the Euclidean norm $\|\cdot\|$.

Without loss of generality, we set $J_{1}=0$ corresponding to the single zero eigenvalue. Thus, the second term in (\ref{sol1}) can be transformed into
\begin{eqnarray*}
Q^{-1}\int_{0}^{t}\exp(Y(t-s))z(s)\,ds=\int_{0}^{t}\exp(J(t-s))\tilde{z}(s)\,ds
\end{eqnarray*}
with $\tilde{z}(s)=Q^{-1}z(s)$. The component corresponding to the Jordan block $J_{k}$ can be written as $\int_{0}^{t}\exp(J_{k}(t-s))z^{k}(s)\,ds$, where $z^{k}$ is the component vector corresponding to $J_{k}$.

We will show that $\int_{0}^{t}\exp(J_{k}(t-s))z^{k}(s)ds$ is bounded for each $k\ge 1$.
For each $k>1$, there exists a norm $\|\cdot\|_{k}$ such that
\begin{align*}
\left\|\int_{0}^{t}\exp(J_{k}(t-s))z^{k}(s)\,ds\right\|_{k}&\le\int_{0}^{t}
\left\|\exp(J_{k}(t-s))\right\|_{k}\|z^{k}(s)\|_{k}\,ds\\
&\le\int_{0}^{t}\exp(-\lambda_{k}(t-s))
\|z^{k}(s)\|_{k}\,ds
\end{align*}
because the eigenvalues of $\exp(J(t-s))$ are $\exp(\lambda_{k}(t-s))$. Since
${\mathcal Re}(\lambda_{k})<0$ and $z(s)$ ($\tilde{z}(s)$) is bounded, we conclude that $\int_{0}^{t}\exp(J_{k}(t-s))z^{k}(s)\,ds$ is bounded by some constant for all $k>1$.

Consider the component corresponding to $J_{1}=0$:
\begin{equation*}
\int_{0}^{t}\tilde{z}(s)ds=\sum_{q=0}^{\lfloor t/T\rfloor}\int_{qT}^{(q+1)T}\tilde{z}(s)ds+\int_{\lfloor t/T\rfloor T}^{t}\tilde{z}(s)ds
=\int_{\lfloor t/T\rfloor T}^{t}\tilde{z}(s)ds.
\end{equation*}
Since $\int_{t}^{t+T}z(t)=0$ for all $t\ge 0$, this term is bounded, because $\int_{qT}^{(q+1)T}\tilde{z}(s)ds=0$ for $t\ge 0$ and $z(s)$ ($\tilde{z}(s)$) is bounded. Hence $\int_{0}^{t}\exp(J(t-s))\tilde{z}(s)\,ds$ is bounded, and therefore one can see that $\Phi(t)$ is bounded. This proves the first statement of this proposition.

We next prove that the phase difference trajectories are asymptotically stable. (i) Under condition 1, namely that the graph associated with $\bar{L}$ has a spanning tree, a sufficiently small $\epsilon$ guarantees that the graphs of $L(t)$ have spanning trees for all $t\ge 0$. By Theorem \ref{thm1} we conclude that the PD trajectories of the time-varying system (\ref{TVKR}) under ${\mathbf H}_{2}$ are asymptotically stable.  (ii) Under condition 2, a sufficiently small $\epsilon$ guarantees that $\xi(L(t),r)<\xi(\bar{L},r)/2$ , which implies the PD trajectories are asymptotically stable by Theorem \ref{thm2}. (iii) Under condition 3, which is a special form of the arguments above since $J$ is diagonal, a sufficiently small $\epsilon$ guarantees that (\ref{alpha2}) holds for some $h>0$ and $\hat{\alpha}>0$. Hence, the PD trajectories are asymptotically stable by Corollary \ref{cor2}. This completes the proof.
\end{proof}

\begin{remark}
It can be seen that from the proof of Proposition \ref{propx} that, under the conditions of Proposition \ref{propx}, the phase difference trajectories are asymptotically periodic with period equal to that of the time-varying parameters, as a consequence of Proposition \ref{prop1}. The adiabatic case of a large $T$, the transition rate used in \cite{Pet} implies a slow (induced by the slow periodicity of the time-varying parameters) and small (induced by the small perturbation of the time-varying parameters) phase dynamics as well as the phase-difference trajectories.
\end{remark}

To illustrate with a numerical example, we generate a connected undirected Erd\H os-Renyi random graph with $m=20$ nodes
with linking probability $p=0.2$. Let $\bar{A}=[\bar{a}_{ij}]$ denote its adjacency matrix.
We set
\begin{eqnarray*}
\omega_{i}(t)=\bar{\omega_{i}}+\epsilon\sin(t+\alpha_{i}),\quad a_{ij}(t)=\begin{cases} 0, & \bar{a}_{ij}=0,\\
1+\epsilon\cos(t+\beta_{ij}), & \bar{a}_{ij}\ne 0,\end{cases}
\end{eqnarray*}
where the $\alpha_{i}$ and $\beta_{ij}$
are randomly picked in $[-r/2,r/2]$ with $r=\pi/3$, following a uniform distribution. We take $\epsilon=0.1$.
We simulate this system ten times with random initial values picked from the interval $[-r/2,r/2]$. For comparison, we also simulate the Kuramoto model (\ref{SKR}) with fixed frequencies and linking coefficients and the same initial values of those of (\ref{TVKR}). As shown in the top panel of Fig.~\ref{Fig1_small}, $\theta_{1}(t)$ is essentially indistinguishable from its first-order approximation
\begin{equation*}
\theta_{1}(t)\approx\bar{\theta}_{1}(t)+\epsilon\Theta_{1}(t)
\end{equation*}
with $\theta_{i}(0)=\bar{\theta}_{i}(0)$ and $\Theta_{i}(0)=0$ for all $i=\onetom$.
The PD trajectories of the time-varying Kuramoto network are asymptotically stable and the phase differences are close to those of the phase-locked difference of the static system (\ref{SKR}). In addition, $\mathscr{A}^{r}$ (with $r=\pi/3$) is indeed found to be invariant for (\ref{TVKR}).

\begin{figure}[htp]
\centering
\subfigure[Phase dynamics][]
{
\includegraphics[width=.8\textwidth]
{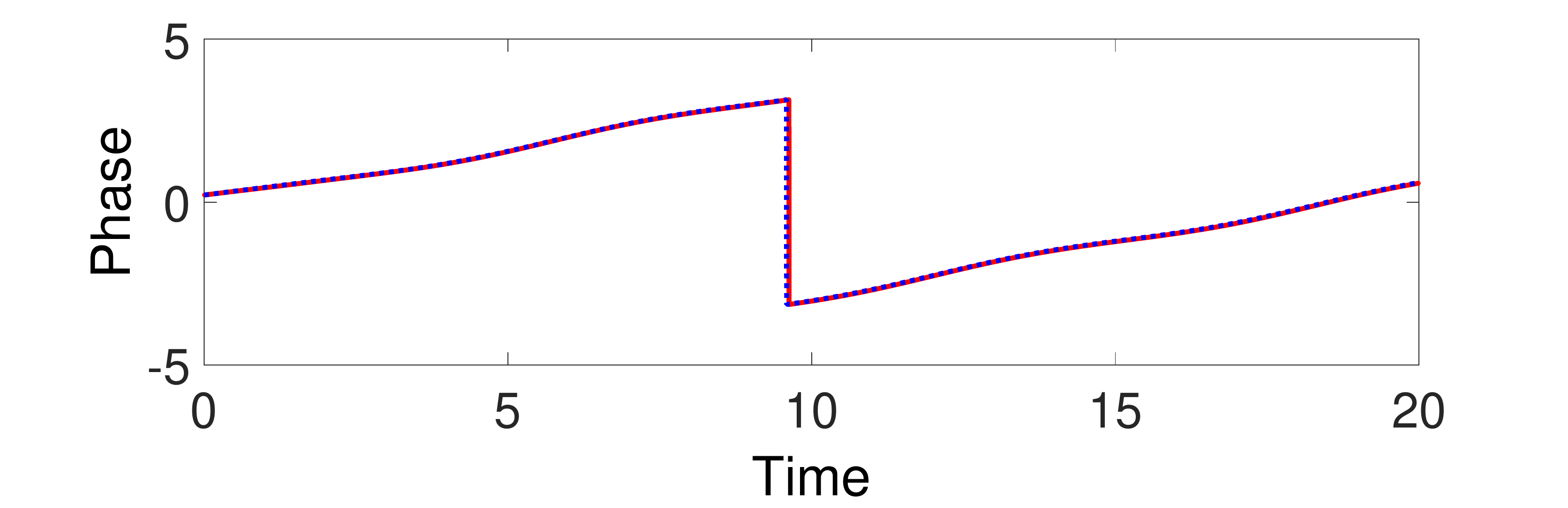}
}
\subfigure[Phase difference dynamics][]{
\includegraphics[width=.9\textwidth, height=4cm]
{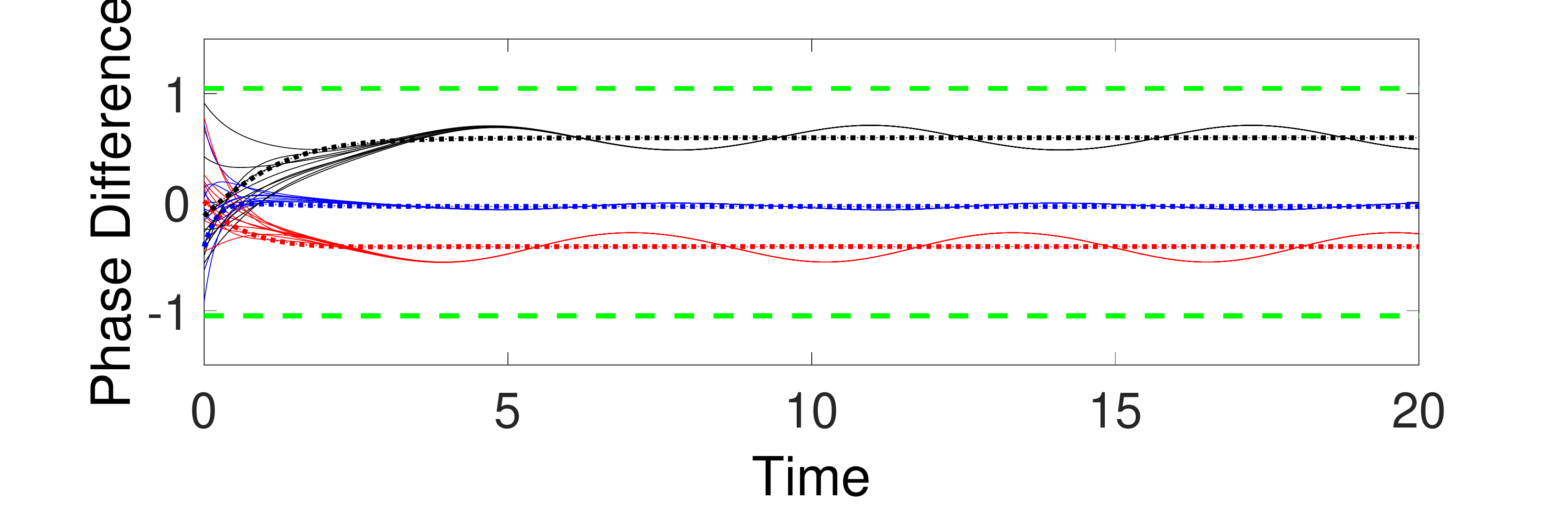}
}
\caption{Top panel: Dynamics of $\theta_{1}(t)$ (red solid line) and its approximation $\bar{\theta}_{1}(t)+\epsilon\Theta_{1}(t)$ (blue dashed line). Bottom panel: Dynamics of the phase differences $\theta_{1}(t)-\theta_{4}(t)$ (red solid lines), $\theta_{3}(t)-\theta_{7}(t)$ (blue solid lines), and $\theta_{17}(t)-\theta_{11}(t)$ (black solid lines) of ten simulations, and the comparisons: $\bar{\theta}_{1}(t)-\theta_{4}(t)$ (red dashed line), $\bar{\theta}_{3}(t)-\bar{\theta}_{7}(t)$ (blue dashed line), and $\bar{\theta}_{17}(t)-\bar{\theta}_{11}(t)$ (black dashed line). The two horizontal green dashed lines mark the values $\pm\pi/3$ corresponding to $\pm r$.}
\label{Fig1_small}
\end{figure}

\subsection{Fast Switching}

In this subsection, we consider the scenario that the time-variation of the parameters is due to fast switching near certain constants with speed $1/\epsilon$,
where $\epsilon > 0$ is a small parameter, analogously to \cite{Stil,Bely}.

Consider the following hypotheses.

${\bf H}_{3}$: $\omega_{i}(t)$ and $a_{ij}(t)$ are piecewise continuous, bounded, periodic functions with period $\epsilon T$ with average values
\begin{equation}  \label{avg}	
\bar{\omega}_{i} = \frac{1}{\epsilon T}\int_{0}^{\epsilon T}\omega_{i}(s)ds,
\qquad \bar{a}_{ij} = \frac{1}{\epsilon T}\int_{0}^{\epsilon T}a_{ij}(s)ds.
\end{equation}


By this hypothesis, let
\begin{equation}
\bar{l}_{ij}=\begin{cases}-\bar{a}_{ij},&i\ne j,\\
-\sum_{j=1}^{m}\bar{l}_{ij},&i=j,
\end{cases}\qquad \bar{L}:=[\bar{l}_{ij}],\label{barL}
\end{equation}

\begin{equation*}
\tilde{\omega}_{i}(s)=\omega_{i}(\epsilon s),\quad \tilde{a}_{ij}(s)=a_{ij}(\epsilon s)
\end{equation*}
and note that they are periodic functions with period $T$ and satisfy
\begin{equation*}
\frac{1}{T}\int_{t}^{t+T}\tilde{\omega}_{i}(s)ds=\bar{\omega}_{i},\quad
\frac{1}{T}\int_{t}^{t+T}\tilde{a}_{ij}(s)ds=\bar{a}_{ij}
\end{equation*}
for all $t$.

Let $\theta(t)=[\theta_{1}(t),\dots,\theta_{m}(t)]^{\top}$ be the solution of (\ref{TVKR}) with the time-varying parameters $\omega_{i}(t)$ and $a_{ij}(t)$ satisfying hypotheses ${\bf H}_{3}$, and $\bar{\theta}(t)=[\bar{\theta}_{1}(t),\dots,\bar{\theta}_{m}(t)]^{\top}$ be the solution of  (\ref{SKR}) with constant parameters $\bar{\omega}_{i}$ and $\bar{a}_{ij}$ as in \eqref{avg}. We assume that (\ref{SKR}) possesses a stable phase-locked equilibrium, denoted by $\bar{\theta}_{i}(t)$, with phase differences $\bar{\theta}_{ij}=\bar{\theta}_{i}-\bar{\theta}_{j}$ being constants in time.

Let $\Delta_{i}(t)=\theta_{i}(t)-\bar{\theta}_{i}(t)$, which obey
\begin{eqnarray}
\dot{\Delta}_{i}&=&[\tilde{\omega}_{i}(t/\epsilon)-\bar{\omega}_{i}]+\sum_{j=1}^{m}
[\tilde{a}_{ij}(t/\epsilon)-\bar{a}_{ij}]\sin(\bar{\theta}_{ji})\nonumber\\
&&
+\sum_{j=1}^{m}\tilde{a}_{ij}(t/\epsilon)[\sin(\theta_{ji}(t))-\sin(\bar{\theta}_{ji})]\nonumber\\
&=&[\tilde{\omega}_{i}(t/\epsilon)-\bar{\omega}_{i}]+\sum_{j=1}^{m}
[\tilde{a}_{ij}(t/\epsilon)-\bar{a}_{ij}]\sin(\bar{\theta}_{ji})\nonumber\\
&&
+\sum_{j=1}^{m}\tilde{a}_{ij}(t/\epsilon)\cos(\zeta_{ji}(t))
[\Delta_{j}-\Delta_{i}],\quad i=\onetom,
\label{variation2}
\end{eqnarray}
where $\zeta_{ji}\in[\min(\theta_{ji}(t),\bar{\theta}_{ji}),\max(\theta_{ji}(t),\bar{\theta}_{ji})]$ are picked by the mean-value theorem with $\cos(\zeta_{ij})=\cos(\zeta_{ij})$. We then have the following result.

\begin{proposition}  \label{prop2}
Let $r\in[0,\pi/2)$ and suppose that $\mathscr{A}^{r}$ is invariant for \eqref{TVKR},  ${\bf H}_{3}$ holds, \eqref{SKR} possesses a phase-locked solution $[\bar{\theta}_{1}(t),\dots,\bar{\theta}_{m}(t)]^{\top}\in\mathscr{A}^{r}$ as described by (\ref{ps}), and $L(t)$ is symmetric and positive semidefinite for all $t\ge 0$. If $\lambda_{2}(\bar{L})>0$, where $\bar{L}$ is defined in \eqref{barL}.
Then there exists some $\epsilon'>0$ such that the PD trajectories of \eqref{TVKR} have the form of $\theta_{ij}(t)=\bar{\theta}_{ij}+\epsilon\Upsilon_{ij}(t)$ as $t\to\infty$ for some functions $\Upsilon_{ij}(t)$ bounded with respect to $t>0$ and $\epsilon'>\epsilon>0$. In addition, if $\epsilon$ is sufficiently small, then the PD trajectories is asymptotically stable within $\mathscr A^{r}$.
\end{proposition}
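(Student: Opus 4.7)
The proof naturally splits into two parts: the asymptotic stability of phase differences, which I would reduce to Corollary \ref{cor2}, and the $O(\epsilon)$ estimate of the deviation from the phase-locked reference, which I would obtain by an averaging argument on the variational equation \eqref{variation2}.

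For the stability part, I would pick $h$ as any positive integer multiple of $\epsilon T$. Because ${\bf H}_{3}$ makes $L(t)$ periodic with period $\epsilon T$ and mean $\bar{L}$, the identity $\bar{L}(kh,(k+1)h)=\bar{L}$ holds exactly for every $k$. To invoke Corollary \ref{cor2} I must verify $\lambda_{2}(\tilde{\bar{L}}^{r})\ge\hat{\alpha}>0$. Writing out the quadratic form on $\mathbf{1}^{\perp}$ using the symmetry of $\bar{L}$ gives
\[
y^{\top}\tilde{\bar{L}}^{r}y=y^{\top}\bar{L}y-(1-\cos(r))\sum_{i<j}\max(\bar{a}_{ij},0)\,(y_{i}-y_{j})^{2},
\]
so $\lambda_{2}(\bar{L})>0$ combined with a smallness of $1-\cos(r)$ relative to the weight of the attractive edges produces the required positive lower bound. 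The hypothesis that $\mathscr{A}^{r}$ is invariant (via Lemma \ref{lem0}) already imposes a compatible restriction on $r$. Corollary \ref{cor2} then gives exponential asymptotic stability of PDs within $\mathscr{A}^{r}$ for all $\epsilon$ small enough.

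For the $O(\epsilon)$ representation, I would rewrite \eqref{variation2} as $\dot{\Delta}=F(t/\epsilon)-B(t)\Delta$, with $F_{i}(s)=[\tilde{\omega}_{i}(s)-\bar{\omega}_{i}]+\sum_{j}[\tilde{a}_{ij}(s)-\bar{a}_{ij}]\sin(\bar{\theta}_{ji})$ and $B(t)$ the signed Laplacian with off-diagonal entries $-\tilde{a}_{ij}(t/\epsilon)\cos(\zeta_{ji}(t))$. By \eqref{avg}, $\int_{0}^{T}F(u)\,du=0$, so the primitive $\Omega(\tau):=\int_{0}^{\tau}F(u)\,du$ is $T$-periodic and uniformly bounded. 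Applying variation of constants and the substitution $s=\epsilon\tau$, the forced response reads $\epsilon\int_{0}^{t/\epsilon}\Phi(t,\epsilon\tau)F(\tau)\,d\tau$, where $\Phi(t,s)$ is the state-transition matrix of $\dot{\Delta}=-B(t)\Delta$. Partitioning the integral into fast periods of length $T$ and summing by parts against $\Phi(t,\cdot)$, the boundedness of $\Omega$ kills the endpoint terms while the exponential contractivity of $\Phi$ on $\mathbf{1}^{\perp}$ (from the stability step) bounds the remainder by $C\epsilon$ uniformly in $t$. Since phase differences depend only on the $\mathbf{1}^{\perp}$-component of $\Delta$, this produces the representation $\theta_{ij}(t)=\bar{\theta}_{ij}+\epsilon\Upsilon_{ij}(t)$ with $\Upsilon_{ij}$ bounded for $t\ge 0$ and $\epsilon\in(0,\epsilon')$.

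The main obstacle is the coupling between the two parts: $B(t)$ depends on $\zeta_{ji}(t)\in[\min(\theta_{ji},\bar{\theta}_{ji}),\max(\theta_{ji},\bar{\theta}_{ji})]$, hence on the unknown $\Delta(t)$. I would resolve this by a bootstrap: the invariance of $\mathscr{A}^{r}$ yields an a priori bound on $\Delta$ that keeps $\cos(\zeta_{ji}(t))\in[\cos(r),1]$, which is enough for $B(t)$ to retain the symmetric positive semidefinite structure needed for the stability step. The asymptotic representation then refines this into $\zeta_{ji}(t)=\bar{\theta}_{ji}+O(\epsilon)$, which a posteriori justifies the $O(\epsilon)$ order of $\Upsilon_{ij}$ and closes the argument. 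A secondary issue is ensuring $\lambda_{2}(\tilde{\bar{L}}^{r})>0$ in the presence of negative $\bar{a}_{ij}$; the explicit quadratic-form expression above shows this follows either from $r$ being sufficiently small or from the repulsive part of $\bar{L}$ being dominated by its attractive part, both of which are compatible with the invariance hypotheses already imposed on $\mathscr{A}^{r}$.
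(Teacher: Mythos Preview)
Your proposal is essentially the paper's own approach: establish exponential contractivity of the state-transition matrix of \eqref{variation21} on $\mathbf{1}^\perp$ (the paper packages this as Lemma \ref{lem2}, you invoke Corollary \ref{cor2} with $h$ a multiple of $\epsilon T$), then use variation of constants together with the zero-mean of the fast forcing and that exponential decay to bound the forced response by $O(\epsilon)$ --- the paper's Taylor expansion of $U(t,\tau)$ about the period endpoints $n\epsilon T$ is exactly your integration-by-parts against the bounded primitive $\Omega$. Your bootstrap remark on the $\zeta_{ji}$-dependence of $B(t)$ and your flagging of the $\lambda_2(\tilde{\bar{L}}^{r})>0$ subtlety are points the paper treats more tersely, but neither changes the line of argument.
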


Let
\begin{equation*}
r_{i}(s)=[\tilde{\omega}_{i}(s)-\bar{\omega}_{i}]+\sum_{j=1}^{m}
[\tilde{a}_{ij}(s)-\bar{a}_{ij}]\sin(\bar{\theta}_{ji}),
\end{equation*}
and $r(s)=[r_{1}(s),\dots,r_{m}(s)]^{\top}$,
which implies
$
\int_{s}^{s+T}r(\chi)d\chi=0
$
for all $s\ge 0$.  Let
\begin{equation*}
R_{ij}(t,\epsilon)=\begin{cases}\tilde{a}_{ij}(t/\epsilon)\cos(\zeta_{ji}(t))&i\ne j\\
-\sum_{k=1}^{m}R_{ik}(t,\epsilon)&i=j.\end{cases}
\end{equation*}
and define the matrix $R(t,\epsilon)=[R_{ij}(t,\epsilon)]_{i,j=1}^{m}$.
It can be seen that $R(t,\epsilon)$ is symmetric for all $t$ due to the symmetry of $L(t)$ and $\cos(\zeta_{ij})$.
Then (\ref{variation2}) can be rewritten in the compact form
\begin{equation}  \label{variation21}
	\dot{\Delta}=r(t/\epsilon)+R(t,\epsilon)\Delta(t),
	\quad \Delta(t)=[\Delta_{1}(t),\dots,\Delta_{m}(t)]^{\top}.
\end{equation}
We first prove a lemma as a preparation for the proof of Proposition \ref{prop2}.

\begin{lemma}\label{lem2}
Let $U(t,s;\epsilon)$ be the state-transition matrix of the linear system
\begin{equation}
\dot{z}=R(t,\epsilon)z(t) \label{linear2}
\end{equation}
and assume the conditions in Proposition \ref{prop2}. Then there exist  positive numbers $\epsilon'$, $M$, $T_{1}$, and $\alpha$ such that for each $s\ge 0$, the inequality
\begin{equation}
\|U(t,s;\epsilon)-\frac{1}{m}{\bf 1}\otimes{\bf 1}\|\le M\exp(-\alpha (t-s))\label{coverg}
\end{equation}
holds for all $\epsilon\in(0,\epsilon')$ and $t\ge s+T_{1}$. In addition, let
\begin{equation*}
\mathscr L=\{x=[x_{1},\dots,x_{m}]^{\top}:~\sum_{i=1}^{m}x_{i}=0\}.
\end{equation*}
Then $U(t,s;\epsilon) \mathscr L\subset\mathscr L$ and for each $s\ge 0$
\begin{equation*}
\|U(t,s;\epsilon)\|_{\mathscr L}\le M\exp(-\alpha (t-s))
\end{equation*}
for all $\epsilon\in(0,\epsilon')$ and $t\ge s+T_{1}$.
\end{lemma}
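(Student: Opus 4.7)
The plan is to combine three ingredients: the invariance of $\mathscr L$ inherited from the symmetry and zero-row-sum structure of $R(t,\epsilon)$; an averaging estimate comparing $U(t,s;\epsilon)$ to the transition matrix of a slowly-varying limit system; and a uniform spectral gap for that limit system, secured by the hypothesis $\lambda_{2}(\bar L)>0$.

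First I would note that since $L(t)$ is symmetric and $\cos(\zeta_{ij})=\cos(\zeta_{ji})$, the matrix $R(t,\epsilon)$ is symmetric with zero row (and hence column) sums. Therefore $R(t,\epsilon)\mathbf 1=0$ and $\mathbf 1^{\top}R(t,\epsilon)=0$, so $U(t,s;\epsilon)\mathbf 1=\mathbf 1$ and $\mathbf 1^{\top}U(t,s;\epsilon)=\mathbf 1^{\top}$. Consequently $\mathscr L$ is $U$-invariant, and with $\Pi:=\tfrac{1}{m}\mathbf 1\mathbf 1^{\top}$ we have $U(t,s;\epsilon)-\Pi=(I-\Pi)U(t,s;\epsilon)(I-\Pi)$, which gives $\|U(t,s;\epsilon)-\Pi\|=\|U(t,s;\epsilon)\|_{\mathscr L}$; hence the two inequalities in the lemma are equivalent, and it suffices to establish the restricted bound.

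Next I introduce the slowly-varying limit matrix $\bar R_{0}(t)$ defined by $(\bar R_{0}(t))_{ij}=\bar a_{ij}\cos(\zeta_{ji}(t))$ for $i\ne j$, with zero row sums. Because $\tilde a_{ij}(\cdot)$ has period $T$ and mean $\bar a_{ij}$, while $\cos(\zeta_{ji}(\tau))$ is Lipschitz in $\tau$ with an $\epsilon$-independent constant (which follows from the invariance of $\mathscr A^{r}$ and the uniform bound on $\dot\theta$), a standard fast-oscillation Gr\"onwall comparison yields $\|U(s+T_{1},s;\epsilon)-\bar U(s+T_{1},s)\|\le K(T_{1})\,\epsilon$ for any fixed window length $T_{1}>0$, where $\bar U(t,s)$ is the transition matrix of $\dot{\bar z}=\bar R_{0}(t)\bar z$ and $K(T_{1})$ is $\epsilon$-uniform. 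For the spectral gap of $-\bar R_{0}(t)$ on $\mathscr L$, the quadratic-form identity $x^{\top}(-\bar R_{0}(t))x=\tfrac{1}{2}\sum_{i\ne j}\bar a_{ij}\cos(\zeta_{ji}(t))(x_{i}-x_{j})^{2}$ together with $x^{\top}\bar L x=\tfrac{1}{2}\sum_{i\ne j}\bar a_{ij}(x_{i}-x_{j})^{2}$ and the pointwise pinching $\cos(\zeta_{ji})\in[\cos r,1]$, handled exactly as in the proof of Lemma~\ref{lem1}, produces a uniform lower bound $x^{\top}(-\bar R_{0}(t))x\ge\nu\|x\|^{2}$ on $\mathscr L$ for a constant $\nu>0$ depending only on $\cos r$ and $\lambda_{2}(\bar L)$. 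In particular $\|\bar U(s+T_{1},s)\|_{\mathscr L}\le e^{-\nu T_{1}}$.

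Choosing $T_{1}$ so that $e^{-\nu T_{1}}\le\tfrac{1}{2}$ and then $\epsilon'$ so that $K(T_{1})\epsilon'\le\tfrac{1}{4}$ yields $\|U(s+T_{1},s;\epsilon)\|_{\mathscr L}\le\tfrac{3}{4}$ uniformly in $s\ge 0$ and $\epsilon\in(0,\epsilon')$. The cocycle identity $U(t,s;\epsilon)=U(t,t-T_{1};\epsilon)\cdots U(s+T_{1},s;\epsilon)$ then delivers the geometric decay $\|U(t,s;\epsilon)\|_{\mathscr L}\le M\exp(-\alpha(t-s))$ with $\alpha=(\ln\tfrac{4}{3})/T_{1}$ and $M$ absorbing the last incomplete window; by the equivalence established earlier this is also (\ref{coverg}). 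The main obstacle is the averaging estimate itself: $R(t,\epsilon)$ does not split cleanly into a slow part and an $\epsilon$-fluctuating fast part, because the slow factor $\cos(\zeta_{ji}(t))$ is determined by the full nonlinear Kuramoto solution and carries its own $t$-dependence. One must exploit the invariance of $\mathscr A^{r}$ to obtain a uniform Lipschitz bound on $\zeta_{ji}$ before the classical fast-averaging lemma (combined with integration by parts against the zero-mean fast oscillation $\tilde a_{ij}(\tau/\epsilon)-\bar a_{ij}$) can be rigorously applied.
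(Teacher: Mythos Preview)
Your overall route differs substantially from the paper's. The paper does \emph{not} introduce a slowly-varying limit system $\bar R_0(t)$ or carry out any averaging comparison between $U$ and a limiting $\bar U$. Instead it feeds the original system $\dot z=R(t,\epsilon)z$ directly into the consensus machinery already developed in the paper (Theorem~\ref{thm3}/Corollary~\ref{cor2}, ultimately Lemma~\ref{thmX}): it argues that the time-average $\frac{1}{T}\int_t^{t+T}R(s,\epsilon)\,ds$ inherits a uniform second-eigenvalue bound from $\lambda_2(\bar L)>0$, which yields exponential consensus of $U(t,s;\epsilon)z^0$ for every $z^0$. It then invokes the conservation law $\mathbf 1^{\top}U(t,s;\epsilon)=\mathbf 1^{\top}$ (from symmetry of $R$) to identify the consensus value as $\frac{1}{m}\mathbf 1^{\top}z^0$, establishing \eqref{coverg}, and finally deduces the $\mathscr L$-invariance and the restricted-norm bound separately. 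Your observation that $U-\Pi=(I-\Pi)U(I-\Pi)$ and hence $\|U-\Pi\|=\|U\|_{\mathscr L}$ is correct and in fact tidier than the paper's two-step derivation. Your averaging strategy is a legitimate alternative, and you rightly single out the averaging estimate $\|U-\bar U\|\le K(T_1)\epsilon$ as the main technical work (the Lipschitz-in-$t$ bound on $\cos\zeta_{ji}$ is indeed what makes the fast/slow split go through).

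There is, however, a genuine gap in your spectral-gap step for $-\bar R_0(t)$. You assert that the pinching $\cos(\zeta_{ji})\in[\cos r,1]$, ``handled exactly as in the proof of Lemma~\ref{lem1},'' gives $x^{\top}(-\bar R_0(t))x\ge\nu\|x\|^2$ on $\mathscr L$ with $\nu$ depending only on $\cos r$ and $\lambda_2(\bar L)$. But Lemma~\ref{lem1} runs in the \emph{opposite} direction: it shows $\lambda_2(\tilde L^r)\le\lambda_2(L)$, i.e.\ replacing nonpositive off-diagonals $l_{ij}$ by $l_{ij}\cos r$ can only \emph{decrease} the gap. Your quadratic-form comparison actually yields $x^{\top}(-\bar R_0(t))x\ge x^{\top}\tilde{\bar L}^r x$, so what you would need is $\lambda_2(\tilde{\bar L}^r)>0$, and this is \emph{not} implied by $\lambda_2(\bar L)>0$ when some $\bar a_{ij}$ are negative. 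Concretely, with $m=3$, $\bar a_{12}=\bar a_{23}=3$, $\bar a_{13}=-1$, one has $\bar L$ positive semidefinite with $\lambda_2(\bar L)=1$, yet choosing $\cos(\zeta_{12})=\cos(\zeta_{23})=\cos r$ small and $\cos(\zeta_{13})=1$ makes $-\bar R_0$ indefinite (its $(1,1)$ entry is $3\cos r-1<0$). Thus $\nu$ cannot be controlled by $\cos r$ and $\lambda_2(\bar L)$ alone in the signed case; you need an additional argument (or the stronger hypothesis $\lambda_2(\tilde{\bar L}^r)>0$) to close this step. The paper's argument at the analogous point is itself quite terse, but it does at least work with $G(t)=-R(t,\epsilon)$ built from the pointwise $a_{ij}(t)$---for which each $L(t)$ is assumed positive semidefinite---rather than from the averaged $\bar a_{ij}$, so the semidefiniteness input is different from yours.
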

\begin{proof}
%
The conditions of Proposition \ref{prop2}, the symmetry of $R(t,\epsilon)$, and Theorem \ref{thm3} give
\begin{eqnarray*}
\frac{1}{T}\int_{t}^{t+T}R(s,\epsilon)\, ds\le \frac{\cos(r)}{2}\bar{L},
\end{eqnarray*}
which implies $\lambda_{2}[(1/T)\int_{t}^{t+T}R(s,\epsilon)ds]\le\cos(r)/2\lambda_{2}(\bar{L})$ is negative. Therefore, for each initial time $s$ and initial value $z(s)=z^{0}$, the solution of (\ref{linear2}), denoted by $U(t,s;\epsilon)z^{0}$, reaches consensus exponentially at a rate $O(\exp(-\alpha (t-s))$, for some $\alpha>0$ depending on $T$ and $\cos(r)/2\lambda_{2}(\bar{L})$.


Let $z(t)=U(t,s)z^{0}$. Noting the fact that
\begin{eqnarray*}
\frac{d}{dt}\left({\bf 1}^{\top}z(t)\right)={\bf 1}^{\top}R(t,\epsilon)z(t)=0
\end{eqnarray*}
for any $z^{0}\in\R^{m}$, we have ${\bf 1}^{\top}U(t,s;\epsilon)={\bf 1}$ by the symmetry of $R(t,\epsilon)$.  Hence, in both cases, from \cite{Chatt}, one can see that
\begin{align}
\lim_{t\to\infty}U(t,s;\epsilon)z^{0}=\zeta{\bf 1}\label{zeta2}
\end{align}
for some $\zeta\in\R$.

Since on the one hand
\begin{eqnarray*}
\frac{1}{m}{\bf 1}^{\top}U(t,s;\epsilon)z^{0}=\frac{1}{m}{\bf 1}^{\top}z^{0}
\end{eqnarray*}
and on the other hand
\begin{eqnarray*}
\frac{1}{m}{\bf 1}^{\top}{\bf 1}\zeta=\zeta,
\end{eqnarray*}
we have $\zeta=(1/m)\sum_{i=1}^{m}z^{0}_{i}$. Since this holds for all $z^{0}\in\R^{m}$, the first statement is proved.

For any $y=[y_{1},\dots,y_{m}]\in\mathscr L$, namely, $\sum_{i=1}^{m}y_{i}=0$, we have
\begin{eqnarray*}
{\bf 1}^{\top}U(t,s;\epsilon)y=U(t,s;\epsilon){\bf 1}^{\top}y=0,~\forall~t\ge s.
\end{eqnarray*}
In other words, $U(t,s)y\in\mathscr L$ for all $t\ge s$. Therefore, $U(t,s;\epsilon)\mathscr L\subset\mathscr L$.

Thus, by (\ref{coverg}) and the fact that ${\bf 1}^{\top}y=0$,  we have
\begin{eqnarray}
\|U(t,s;\epsilon)-\frac{1}{m}{\bf 1}{\bf 1}^{\top}y\|=\|U(t,s;\epsilon)y\|_{\mathscr L}\le M\exp(-\alpha(t-s))\|y\|,
\end{eqnarray}
which proves the second statement, and completes the proof of the lemma.
\end{proof}

Let $w(t,s;\epsilon)=\frac{\partial U(t,s;\epsilon)}{\partial s}$ and note that
\begin{align*}
\frac{\partial w(t,s;\epsilon)}{\partial t}&=\frac{\partial}{\partial t}\frac{\partial
U(t,s;\epsilon)}{\partial s}=\frac{\partial}{\partial s}\frac{\partial
U(t,s;\epsilon)}{\partial t}\\
&=\frac{\partial}{\partial s}R(t,\epsilon)U(t,s)=R(t,\epsilon)w(t,s),
\end{align*}
and $w(t,t;\epsilon)=R(t,\epsilon)$. Hence,
\begin{eqnarray*}
w(t,s;\epsilon)=U(t,s;\epsilon)R(s,\epsilon).
\end{eqnarray*}
This implies that each column vector of $w(t,s;\epsilon)$ is a bounded linear combination of the column vectors of $U(t,s;\epsilon)$. In addition,
\begin{align*}
{\bf 1}^{\top}w(t,s;\epsilon)={\bf 1}^{\top}U(t,s;\epsilon)R(s,\epsilon)={\bf 1}^{\top}R(s,\epsilon)=0,
\end{align*}
implying that $w(t,s;\epsilon)$ belongs to the subspace $\mathcal L$. Thus,
\begin{align}
\|w(t,s;\epsilon)\|\le M_{1}\exp(-\alpha(t-s))\label{w1}
\end{align}
for some $M_{1}>0$ and all $t>s\ge 0$.


\begin{proof}[Proof of Proposition \ref{prop2}] Let $\epsilon\in(0,\epsilon')$. We rewrite $U(t,s;\epsilon)$ and $w(t,s;\epsilon)$ as $U(t,s)$ and $w(t,s)$ respectively for simplicity.

The solution of (\ref{variation21}) has the form
\begin{eqnarray*}
\Delta(t)=U(t,0)\Delta(0)+\int_{0}^{t}U(t,\tau)r(\tau/\epsilon)d\tau.
\end{eqnarray*}
Equivalently,
\begin{align*}
\Delta(t)={\bf 1}\zeta+U(t,0)\Delta(0)-{\bf 1}\zeta+\epsilon O(t)
\end{align*}
with $O(t)=\frac{1}{\epsilon}\int_{0}^{t}U(t,\tau)r(\tau/\epsilon)d\tau$.
By Lemma \ref{lem2}, the term $U(t,0)\Delta(0)$ converges to ${\bf 1}\zeta$ for  $\zeta=\sum_{i=1}^{m}\Delta_{i}(0)$. That is, $\lim_{t\to\infty}U(t,0)\Delta(0)-{\bf 1}\zeta=0$.
The term $\epsilon O(t)$ becomes
\begin{eqnarray*}
\int_{0}^{t}U(t,\tau)r(\tau/\epsilon)d\tau=\sum_{n=0}^{K}\int_{n\epsilon T}^{(n+1)\epsilon T}U(t,\tau)r(\tau/\epsilon)d\tau
+\int_{K\epsilon T}^{t}U(t,\tau)r(\tau/\epsilon)\,d\tau,
\end{eqnarray*}
where $K=\lfloor(t-s)/(\epsilon T)\rfloor$. Using the fact that
\begin{eqnarray*}
U(t,\tau)=U(t,n\epsilon T)+(\tau-n\epsilon T)\int_{0}^{1}w(t,\lambda\tau+(1-\lambda)(n\epsilon T))\,d\lambda,
\end{eqnarray*}
we have
\begin{eqnarray*}
&&\int_{n\epsilon T}^{(n+1)\epsilon T}U(t,\tau)r(\tau/\epsilon)\, d\tau=
\int_{n\epsilon T}^{(n+1)\epsilon T}\bigg[U(t,n\epsilon T)\\
&&+(\tau-n\epsilon T)\int_{0}^{1}w(t,\lambda\tau+(1-\lambda)(n\epsilon T)\, d\lambda\bigg]~r(\tau/\epsilon)\, d\tau.
\end{eqnarray*}
Note
\begin{eqnarray*}
\int_{n\epsilon T}^{(n+1)\epsilon T}U(t,n\epsilon T)r(\tau/\epsilon)\, d\tau=
U(t,n\epsilon T)\epsilon\int_{nT}^{(n+1)T}r(\chi)\, d\chi=0.
\end{eqnarray*}
Using the fact  ${\bf 1}^{\top}R(t,\epsilon)=0$, the symmetry of $R(t,\epsilon)$, Lemma \ref{lem2}, and the inequality (\ref{w1}), we obtain
\begin{eqnarray*}
\|w(t,s)\|\le M_{2}\exp(-\alpha(t-s))\quad\forall~t>s\ge 0
\end{eqnarray*}
for some $M_{2}>0$ and $\alpha>0$. Thus, one can derive
\begin{eqnarray*}
&&\left\|\int_{n\epsilon T}^{(n+1)\epsilon T}(\tau-n\epsilon T)\int_{0}^{1}w(t,\lambda\tau+(1-\lambda)(n\epsilon T))\, d\lambda~r(\tau/\epsilon)\, d\tau\right\|\\
&\le&\epsilon M_{3}\exp(-\alpha(t-(n+1)\epsilon T))
\end{eqnarray*}
for some $M_{3}\ge M_{2}>0$. Hence,
\begin{eqnarray*}
&&\left\|\sum_{n=0}^{K}\int_{n\epsilon T}^{(n+1)\epsilon T}U(t,\tau)r(\tau/\epsilon)\, d\tau\right\|\\
&\le&\epsilon M_{3}\sum_{n=0}^{K}\exp(-\alpha(t-(n+1)\epsilon T))\le\epsilon M_{3}\frac{\exp(2\alpha\epsilon T)}{\exp(\alpha\epsilon T)-1}.
\end{eqnarray*}
In addition,
\begin{eqnarray*}
\left\|\int_{K\epsilon T}^{t}U(t,\tau)r(\tau/\epsilon)\, d\tau\right\|\le\epsilon M_{4}
\end{eqnarray*}
for some $M_{4}>0$ with $|U(t,\tau)r(\tau/\epsilon)|\le M_{4}$.

To sum up, noting that the constants $M_{1,2,3,4}$ are independent of $\epsilon$, one can conclude that the term $O(t)$ is bounded with respect to both $\epsilon$ and $t$.
Hence,
\begin{eqnarray*}
\Delta(t)\sim {\bf 1}\zeta+\epsilon O(t),\quad \text{as } t\to\infty
\end{eqnarray*}


Let $\Upsilon_{ij}(t)=O_{j}(t)-O_{i}(t)$, which are  bounded with respect to $t\ge 0$ and $\epsilon>0$, and $\bar{\theta}_{ij}$ are the PDs of the phase-locked equilibrium of (\ref{TVKR}) when $\omega_{i}(t)\equiv \bar{\omega}_{ij}$ and $a_{ij}(t)\equiv \bar{a}_{ij}$. Thus,
the PD of (\ref{TVKR}) can be written in the form
\begin{align*}
\theta_{ij}(t)&=\bar{\theta}_{i}(t)-\bar{\theta}_{j}(t)+\Delta_{i}(t)-\Delta_{j}(t)\\
&\sim\bar{\theta}_{ij}+\epsilon\Upsilon_{ij}(t),~{\rm as}~t\to\infty.
\end{align*}
This completes the proof.
\end{proof}


To illustrate, we consider a network of five Kuramoto oscillators whose coupling matrix switches between the following two symmetric matrices:
\begin{eqnarray*}
L^{1}&=&\left[\begin{array}{lllll}
-1.6793  & -0.3012  &  2.3645 &  -0.2241 &  -0.1599\\
   -0.3012 &  -1.0878  &  1.0473 &  -0.4689  &  0.8106\\
    2.3645  &  1.0473  & -3.3379 &  -0.4142  &  0.3403\\
   -0.2241 &  -0.4689  & -0.4142 &  -0.4065  &  1.5137\\
   -0.1599  &  0.8106 &   0.3403  &  1.5137 &  -2.5046
\end{array}\right]\\
L^{2}&=&\left[\begin{array}{lllll}
-8.4835&    1.6123&    2.5756  &  2.1175  &  2.1780\\
    1.6123&   -4.3012&    2.2760&    0.5141&   -0.1013\\
    2.5756 &   2.2760 &  -8.3439 &   2.1106 &   1.3817\\
    2.1175  &  0.5141  &  2.1106  & -4.6359  & -0.1064\\
    2.1780   &-0.1013   & 1.3817   &-0.1064   &-3.3521\\
\end{array}\right]
\end{eqnarray*}
and the intrinsic frequency vector switches between the following two vectors:
\begin{eqnarray*}
\omega^{1}&=&[1.3468,    0.0850,    1.8434,    1.9853,    1.1750]^{\top},\\
\omega^{2}&=&[2.2854,    0.6908,    2.4129,    0.5544,    2.7517]^{\top}.
\end{eqnarray*}
(The parameters of this example $L^{1,2}$ and $\omega^{1,2}$ are randomly generated until the specific criteria of Proposition \ref{prop2} are met.)
The system is switched with a frequency $h$. It can be checked that $\Theta_{r}$ with $r=\pi/3$ is invariant for the switched system, and $\lambda_{2}((L^{1}+L^{2})/2)=-2.5004$. Therefore, by Proposition \ref{prop2}, the PD trajectories asymptotically approach those of the averaged system as $h\to \infty$.
As shown in Fig.~\ref{Fig1_fast}, the averaged system of Kuramoto model possesses a phase-locked equilibrium. As the switching frequency increases from $10$ Hz to $50$ Hz, the PD dynamics asymptotically converge to the phase-locked equilibrium as $t\to\infty$, provided $\epsilon$ is sufficiently small (i.e., the switching frequency is sufficiently high).
\begin{figure}[htp]
\centering
\subfigure[Phase dynamics][]
{
\includegraphics[width=.9\textwidth]
{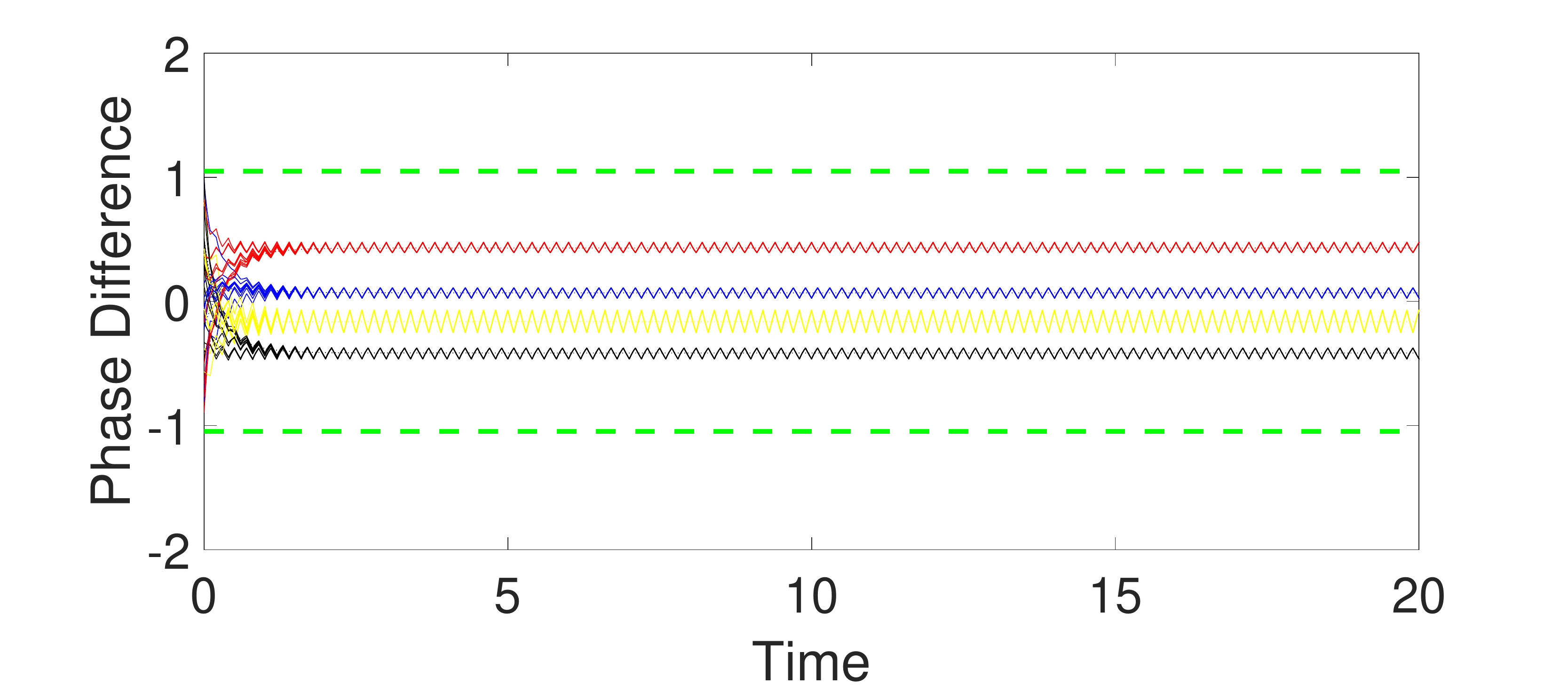}
}
\subfigure[Phase difference dynamics][]{
\includegraphics[width=.9\textwidth]
{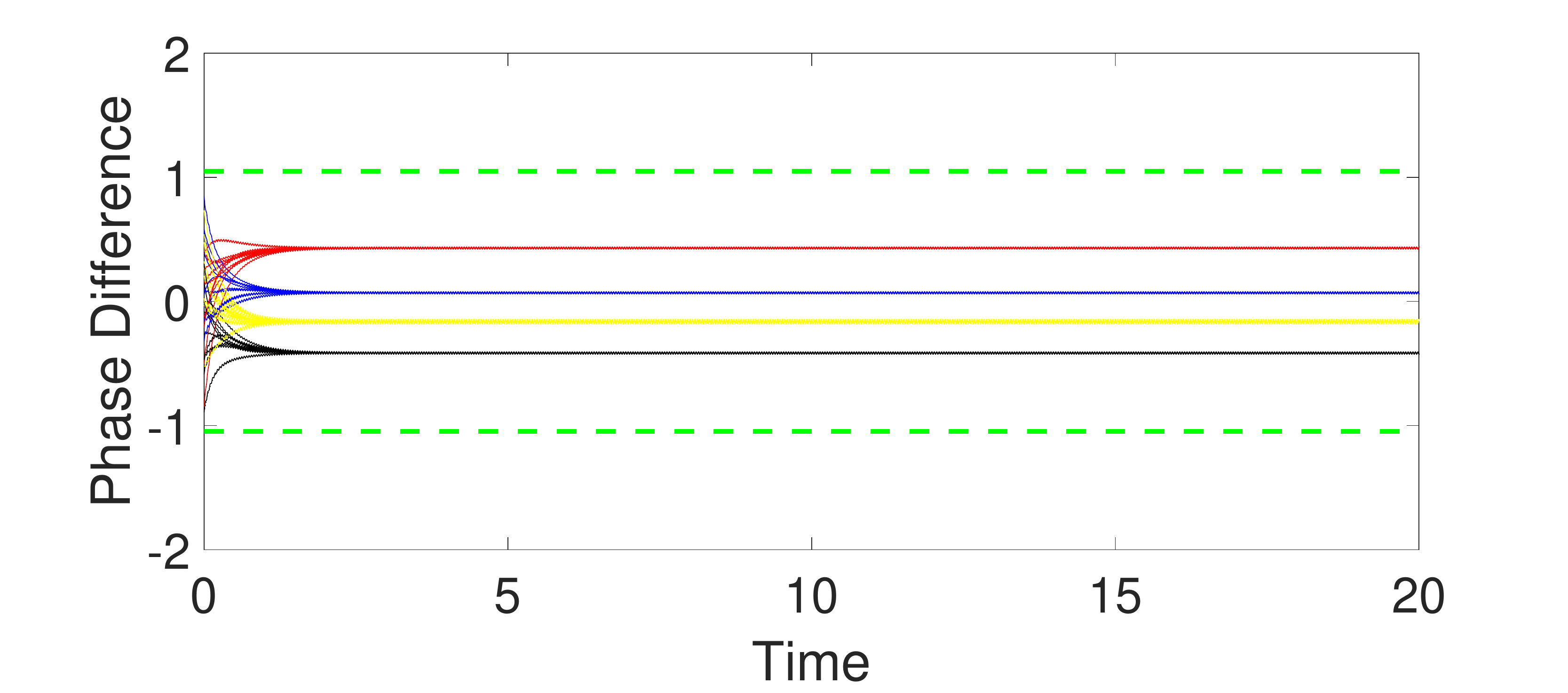}
}
\caption{Evolution of the phase differences of the switched Kuramoto model, $\theta_{1}(t)-\theta_{2}(t)$ (red), $\theta_{2}(t)-\theta_{3}(t)$ (black), $\theta_{3}(t)-\theta_{4}(t)$ (blue), and $\theta_{4}(t)-\theta_{5}(t)$ (yellow), asymptotically approaching constant values in ten simulations starting from randomly chosen initial values in $[-\pi/3,\pi/3]$.
The two horizontal green dashed lines mark the values $\pm\pi/3$ corresponding to $\pm r$.  The switching frequency $h$ is 10 Hz in the top Panel and 50 Hz in the bottom panel.}
\label{Fig1_fast}
\end{figure}

\section{Conclusion}
When the couplings and intrinsic frequencies vary in time, the Kuramoto model cannot maintain phase-locking states {\color{blue} when the number of oscillators is finite}. In this paper, we have studied asymptotical stability of non-equilibrium phase-unlocking dynamics. Assuming that the PDs remain in the interval $[-\pi/2,\pi/2]$ whenever the initial differences do, we have derived sufficient conditions for the asymptotical stability of PDs. As a particular novelty, we have allowed negative couplings in the analysis.
Moreover, we have identified and proved asymptotic PD dynamics in various scenarios and illustrated them by numerical examples. In a future investigation, we will study the situation when the phase differences may be larger than $\pi/2$ and the couplings and intrinsic frequencies may be stochastically changing.

\section*{Acknowledgement}
The authors thank the anonymous reviewers for their constructive comments that
helped improve the paper significantly. W. L. Lu is jointly supported by the National Natural Sciences Foundation
of China under Grant No.~61673119, the Key Program of the National Science Foundation of China No.~91630314, the Laboratory of Mathematics for
Nonlinear Science, Fudan University, and the Shanghai Key Laboratory for Contemporary Applied Mathematics, Fudan University. The authors gratefully acknowledge the support of the ZiF, the Center for Interdisciplinary Research
of Bielefeld University, where part of this research was conducted
under the cooperation program \emph{Discrete and Continuous Models in the Theory of
Networks}.

\section*{Appendix A}
\begin{proof}[Proof of Lemma \ref{lem0}]
Let $t^{*}=\sup\{t:\theta(\tau)\in\mathscr{A}^{r}~\forall~\tau\in[0,t)\}$. We shall prove Lemma \ref{lem0} by showing $t^{*}=\infty$. Assume not. Then for each index $i^{*}$ with $\theta_{i^{*}}(t^{*})=\max_{i}\theta_{i}(t^{*})$ and each $j_{*}$ with $\theta_{j_{*}}(t^{*})=\min_{j}\theta_{j}(t^{*})$, we have $\theta_{i^{*}}(t^{*})-\theta_{j_{*}}(t^{*})=r$. Note that
\begin{align*}
&a_{i^{*}j}(t^{*})\sin(\theta_{j}(t^{*})-\theta_{i^{*}}(t^{*}))
\le-\sin(r)[a_{i^{*}j}(t^{*})]^{-}\\
&a_{j_{*}k}(t^{*})\sin(\theta_{k}(t^{*})-\theta_{j_{*}}(t^{*}))
\ge\sin(r)[a_{j_{*}k}(t^{*})]^{-}
\end{align*}
and when $j\in\Lambda_{i^{*}j_{*}}(t^{*})$ (i.e., $a_{i^{*}j}(t^{*})>0$ and
$a_{j_{*}j}(t^{*})>0$),
\begin{align*}
&a_{i^{*}j}(t^{*})\sin(\theta_{j}(t^{*})-\theta_{i^{*}}(t^{*}))-
a_{j_{*}j}(t^{*})\sin(\theta_{j}(t^{*})-\theta_{j_{*}}(t^{*}))\\
&\le-\min\{a_{i^{*}j}(t^{*}),a_{j_{*}j}(t^{*})\}\left[
\sin(\theta_{i^{*}}(t^{*})-\theta_{j}(t^{*}))
+\sin(\theta_{j}(t^{*})-\theta_{j_{*}}(t^{*}))\right].\\
&\le-\min\{a_{i^{*}j}(t^{*}),a_{j_{*}j}(t^{*})\}\sin(r).
\end{align*}
Therefore,
\begin{align*}
&\dot{\theta}_{i^{*}}-\dot{\theta}_{j_{*}}\left|_{t=t^{*}}\right.
=\omega_{i^{*}}(t^{*})-\omega_{j_{*}}(t^{*})-[a_{i^{*}j_{*}}(t^{*})
+a_{j_{*}i^{*}}(t^{*})]\sin(r)\\
&+\sum_{j\ne j_{*}}a_{i^{*}j}(t^{*})
\sin(\theta_{j}(t^{*})-\theta_{i^{*}}(t^{*}))-\sum_{k\ne i^{*}}a_{j_{*}k}(t^{*})
\sin(\theta_{k}(t^{*})-\theta_{j_{*}}(t^{*}))\\
&\le \omega_{i^{*}}(t)-\omega_{j_{*}}(t^{*})-[a_{i^{*}j_{*}}(t^{*})
+a_{j_{*}i^{*}}(t^{*})]\sin(r)\\
&-\sum_{j\notin \Lambda_{i_{*}j^{*}}(t^{*}),j\ne i^{*},j_{*}}\{[a_{i^{*}j}(t^{*})]^{-}+[a_{i^{*}j}(t^{*})]^{-}\}\sin(r)\\
&-\sum_{k\in \Lambda_{i_{*}j^{*}}(t^{*})}\min\{a_{i^{*}j}(t^{*}),a_{j_{*}j}(t^{*})\}\sin(r)< 0.
\end{align*}
Thus $\theta_{i^{*}}(t)-\theta_{j_{*}}(t)$, and hence $\max_{i}\theta_{i}(t)-\min_{i}\theta_{i}(t)$, decreases in a small time interval starting at $t=t^{*}$. This contradicts the definition of $t^{*}$. Therefore, $t^{*}=\infty$.
\end{proof}

\section*{Appendix B}

\begin{proof}[Proof of Lemma \ref{lem1}]
Since $L$ is symmetric, $\widetilde{L}^{r}$ is symmetric with all row sums equal to $0$. Hence, $\widetilde{L}^{r}-L$ is a symmetric Metzler matrix with all row sums equal to zero, and is negative semidefinite because it is semi-diagonally dominant; so, all its eigenvalues are non-positive. Thus, for each $x\in\R^{n}$ with $x^{\top}{\bf 1}=0$, we have
\begin{eqnarray*}
x^{\top}\widetilde{L}^{r}x\le x^{\top}Lx.
\end{eqnarray*}
Therefore, $\chi_{1}\ge\chi_{2}$.
\end{proof}

\section*{Appendix C}
\begin{proof}[Proof of Lemma \ref{lem2}]
The idea of the proof of this lemma comes from \cite{Guo1994} with necessary modifications, in particular towards continuous-time systems.

From the hypotheses on $G(t)$, one can see that $\lambda_{1}(G(t))=0$. Let $P$ be an arbitrary orthogonal matrix whose first column equals ${\bf 1}/\sqrt{m}$. Since $G(t){\bf 1}=0$ for all $t$, we can write
\begin{eqnarray*}
P^{\top}G(t)P=\left[\begin{array}{cc}0&0\\
0&C(t)\end{array}\right]
\end{eqnarray*}
for some symmetric and positive semidefinite $C(t)\in\R^{m-1,m-1}$. Furthermore,
$\lambda_{2}(G(t))=\lambda_{1}(C(t))$.
Let $y=P^{\top}x$, $y=[y_{1},z]^{\top}$ with $y_{1}\in\R$. By (\ref{ma}),
\begin{eqnarray*}
\begin{cases}\dot{y}_{1}=0\\
\dot{z}=-C(t)z\end{cases}.
\end{eqnarray*}
Consider the linear time-varying system
\begin{eqnarray}
\dot{z}=-C(t)z\label{pr}
\end{eqnarray}
and let $U(t,s)$ be its state-transition matrix for $t\ge s$.
We shall show that
\begin{eqnarray}
\lambda_{m-1}\left[U^{\top}((k+1)h,kh)U((k+1)h,kh)\right]\le 1-\frac{h\beta_{k}}{(1+Rh)^{2}}.
\label{spec_est}
\end{eqnarray}
%
To this end, let $z^{k}$ be the unit eigenvector of $U^{\top}((k+1)h,kh)U((k+1)h,kh)$ associated with its largest eigenvalue, denoted by $\rho_{k}$. Thus, letting $z^{k+1}=U((k+1)h,kh)z^{k}$, which is a solution of (\ref{pr}) with $z(kh)=z^{k}$, denoted by $z(s)$ at $s=(k+1)h$, we have
\begin{eqnarray*}
\|z^{k+1}\|^{2}={z^{k}}^{\top}U^{\top}((k+1)h,kh)U((k+1)h,kh)z^{k}
=\rho_{k}.
\end{eqnarray*}
Noting that
\begin{eqnarray*}
z^{k+1}=z^{k}+\int_{kh}^{(k+1)h}[-C(s)]z(s)ds,
\end{eqnarray*}
and that $C(t)$ is positive semidefinite,
we have
\begin{eqnarray}
&&\|z(t)-z^{k}\|^{2}=\left\|\int_{kh}^{t}[-C(s)]z(s)\, ds\right\|^{2}\nonumber\\
&&\le\left\{\int_{kh}^{t}\|[C(s)]^{1/2}z(s)\|^{2}\, ds\right\}
\left\{\int_{kh}^{t}\|[C(s)]^{1/2}z(s)\|^{2}\, ds\right\}\nonumber\\
&&\le Rh\int_{kh}^{(k+1)h}z(s)^{\top}C(s)z(s)\, ds\label{A1}
\end{eqnarray}
for all $t\in[kh,(k+1)h]$. From the definition of $\beta_{k}$, we have
\begin{eqnarray*}
&&\beta_{k}^{1/2}\sqrt{h}\le \left\{{z^{k}}^{\top}\int_{kh}^{(k+1)h}[C(s)]\,ds \,z^{k}\right\}^{1/2}
=\left\{\int_{kh}^{(k+1)h}\|[C(s)]^{1/2}z^{k}\|^{2}\,ds \right\}^{1/2}\\
&&\le \left\{\int_{kh}^{(k+1)h}\|[C(s)]^{1/2}z(s)\|^{2}\,ds \right\}^{1/2}+\left\{\int_{kh}^{(k+1)h}\|[C(s)]^{1/2}\|^{2}\|z^{k}-z(s)\|^{2}\, ds \right\}^{1/2}\\
&&\le \left\{\int_{kh}^{(k+1)h}z^{\top}(s)[C(s)]z(s)\,ds \right\}^{1/2}+\sqrt{R}\left\{\int_{kh}^{(k+1)h}\|z^{k}-z(s)\|^{2}\, ds \right\}^{1/2}
\end{eqnarray*}
which, combined with (\ref{A1}), implies that
\begin{eqnarray*}
\beta_{k}^{1/2}\sqrt{h}\le(1+Rh)\left\{\int_{kh}^{(k+1)h}z(s)^{\top}[C(s)]z(s)\, ds \right\}^{1/2},
\end{eqnarray*}
that is,
\begin{eqnarray*}
\int_{kh}^{(k+1)h}z(s)^{\top}[C(s)]z(s)\, ds\ge \frac{\beta_{k}h}{(1+Rh)^{2}}.
\end{eqnarray*}
Note that
\begin{eqnarray*}
\frac{d}{dt}z^{\top}(t)z(t)=-2z(t)^{\top}C(t)z(t),
\end{eqnarray*}
which implies
\begin{eqnarray*}
\rho_{k} = {z^{k+1}}^{\top}z^{k+1}=1-2\int_{kh}^{(k+1)h}z(s)C(s)z(s)\, ds
\le 1-\frac{2\beta_{k}h}{(1+Rh)^{2}}.
\end{eqnarray*}
This proves (\ref{spec_est}), and yields $h\beta_{k}/[(1+Rh)^{2}]<1$.
Therefore,
\begin{eqnarray}
&&\|z(nh)\|^{2}=\|U(nh,(n-1)h)x((n-1)h)\|^{2}\le \left[1-\frac{h\beta_{n}}{(1+Rh)^{2}} \right]\|z((n-1)h)\|^{2}\nonumber\\
&&\le\prod_{k=0}^{n}\left[1-\frac{h\beta_{k}}{(1+Rh)^{2}} \right]\|z(0)\|^{2}.  \label{haha}
\end{eqnarray}
Since $\sum_{k=0}^{\infty}\beta_{k}=+\infty$, we conclude $\lim\limits_{n\to\infty}\|z(nh)\|=0$. Moreover, for $t\ge 0$ and $p:=\lfloor t/h\rfloor$,
\begin{eqnarray*}
\|z(t)\|\le\exp(R(t-ph))\|z(ph)\|\le\exp(Rh)\|z(ph)\|
\end{eqnarray*}
since $\|C(t)\|\le R$ for all $t\ge 0$. Thus, $\lim_{t\to\infty}\|z(t)\|=0$. In other words, $\lim_{t\to\infty}y(t)=[y(0),0,\dots,0]^{\top}$. Using the definition of $P$, we conclude
\begin{eqnarray*}
\lim\limits_{t\to\infty}x(t)=\lim\limits_{t\to\infty}Py(t)=y(0){\bf 1},
\end{eqnarray*}
that is, the system reaches consensus.
Furthermore, if $\beta_{k}>\beta_{0}$ for all $k$, it can be seen from (\ref{haha}) that
\begin{eqnarray*}
\|z(t)\|\le\exp(Rh) \, \|z(ph)\|\le\exp(Rh) \, \gamma^{p}\|z(0)\|,
\end{eqnarray*}
where $\gamma=\left[1-\frac{h\beta_{0}}{(1+Rh)^{2}}\right]$. Hence the convergence is exponential.
\end{proof}

\section*{Appendix D}
\begin{proof}[Proof of Lemma \ref{lem3}]
This claim trivially holds for conditions 1 and 3 in Proposition \ref{propx}.
In fact, under condition 2, assume that $Z$ has some eigenvalues with positive real parts, which implies that the linear system
\begin{eqnarray}
\dot{u}=Zu\label{linear1}
\end{eqnarray}
is unstable and unbounded for almost every initial condition. Here $u=[u_{1},\dots,u_{m}]^{\top}$. However, by similar arguments as in the proof of Theorem \ref{thm2}, we can conclude that (\ref{linear1}) reaches consensus, namely, $\lim_{t\to\infty}(u_{i}(t)-u_{j}(t))=0$ for all $i,j$. This implies that for any set of initial values there exists some $u_{0}$ such that $\lim_{t\to\infty}u_{i}(t)=u_{0}$ for all $i$.
This contradicts the assumption of eigenvalues having positive real parts, and completes the proof of the claim.
\end{proof}

\end{document}